\documentclass[a4paper,11pt]{article}
 
\usepackage{latexsym,amssymb,amsmath, amsthm}  

\textheight=22cm
\textwidth=16cm
\topmargin=-1cm
\oddsidemargin=0.22cm

\parskip4pt plus2pt minus2pt
 
\newtheorem{thm}{Theorem}
\newtheorem{lem}{Lemma}
\newtheorem{prop}{Proposition}

\newtheorem{defn}{Definition} 
\newtheorem{rem}{Remark}

\newcommand{\sgn}{\operatorname{sgn}}

\hyphenation{} 

\begin{document}

\title{A Cauchy kernel for the Hermitian submonogenic system}

\author{Fabrizio Colombo$^{\text{a}}$\\
\small{e-mail: fabrizio.colombo@polimi.it}
\and Dixan Pe\~na Pe\~na$^{\text{a,}}$\footnote{Marie Curie fellow of the Istituto Nazionale di Alta Matematica (INdAM)}\\
\small{e-mail: dixanpena@gmail.com}
\and Frank Sommen$^{\text{b}}$\\
\small{e-mail: fs@cage.ugent.be}}

\date{\small{$^\text{a}$Dipartimento di Matematica, Politecnico di Milano\\Via E. Bonardi 9, 20133 Milano, Italy\\\vspace{0.2cm}
$^{\text{b}}$Clifford Research Group, Department of Mathematical Analysis\\Faculty of Engineering and Architecture, Ghent University\\Galglaan 2, 9000 Gent, Belgium}}

\maketitle

\begin{abstract}
\noindent Hermitian monogenic functions are the null solutions of two complex Dirac type operators. The system of these complex Dirac operators is overdetermined and may be reduced to constraints for the Cauchy datum together with what we called the Hermitian submonogenic system (see \cite{NDS1,NDS2}). This last system is no longer overdetermined and it has properties that are similar to those of the standard Dirac operator in Euclidean space, such as a Cauchy-Kowalevski extension theorem and Vekua type solutions. In this paper, we investigate plane wave solutions of the Hermitian submonogenic system, leading to the construction of a Cauchy kernel. We also establish a Stokes type formula that, when applied to the Cauchy kernel provides an integral representation formula for Hermitian submonogenic functions.\vspace{0.2cm}\\
\noindent\textit{Keywords}: Cauchy kernel; Hermitian submonogenic system.\vspace{0.1cm}\\
\textit{Mathematics Subject Classification}: 30G35, 32A26.
\end{abstract}

\section{Introduction}

Let  $\mathbb{R}_{0,m}$ ($m\in\mathbb N$) be the real Clifford algebra generated by the imaginary units $e_1,\ldots,e_m$ with signature $(0,m)$. The elements $e_j$ can be identified with the Euclidean basis of the vector space $\mathbb R^m$ and a multiplication is introduced in $\mathbb{R}_{0,m}$ so that for any $\underline x=\sum_{j=1}^mx_je_j\in\mathbb R^m$:
\[\underline x^2=-\vert\underline x\vert^2=-\sum_{j=1}^mx_j^2.\] 
It thus follows that the elements $e_j$ must satisfy the following multiplication rules
\begin{alignat*}{2} 
e_j^2&=-1,&\qquad &j=1,\dots,m,\\
e_je_k+e_ke_j&=0,&\qquad &1\le j\neq k\le m.
\end{alignat*}
These relations determine the multiplication in $\mathbb{R}_{0,m}$ and a general element $a\in\mathbb R_{0,m}$ may be written as 
\[a=\sum_Aa_Ae_A,\quad a_A\in\mathbb R,\] 
in terms of the basis elements $e_A=e_{j_1}\dots e_{j_k}$, defined for every subset $A=\{j_1,\dots,j_k\}$ of $\{1,\dots,m\}$ with $j_1<\dots<j_k$ (for $A=\emptyset$ one puts $e_{\emptyset}=1$). So the dimension of $\mathbb R_{0,m}$ as a real linear space is $2^m$. Conjugation in $\mathbb R_{0,m}$ is given by $\overline a=\sum_Aa_A\overline e_A$, where $\overline e_A=\overline e_{j_k}\dots\overline e_{j_1}$ with $\overline e_j=-e_j$, $j=1,\dots,m$.

Elements of the form $a=\sum_Aa_Ae_A$ with $\vert A\vert=k$ are called $k$-vectors. Thus every element $a\in\mathbb R_{0,m}$ also admits a so-called multivector decomposition $a=\sum_{k=0}^m[a]_k$, where $[a]_k$ is the projection of $a$ on the space of $k$-vectors $\mathbb R_{0,m}^{(k)}$. Observe that $\mathbb R^{m+1}$ may be naturally embedded in the real Clifford algebra $\mathbb R_{0,m}$ by associating to any element $(x_0,x_1,\ldots,x_m)\in\mathbb R^{m+1}$ the paravector given by 
\[x_0+\underline x=x_0+\sum_{j=1}^mx_je_j\in\mathbb R_{0,m}^{(0)}\oplus\mathbb R_{0,m}^{(1)}.\]
A function $f:\Omega\rightarrow\mathbb{R}_{0,m}$ defined and continuously differentiable in an open set $\Omega$ in $\mathbb R^{m+1}$ (resp. $\mathbb R^m$), is said to be left monogenic (or simply monogenic) if
\[(\partial_{x_0}+\partial_{\underline x})f=0\quad(\text{resp.}\;\partial_{\underline x}f=0)\;\;\text{in}\;\;\Omega,\]
where $\partial_{\underline x}=\sum_{j=1}^me_j\partial_{x_j}$ is the Dirac operator in $\mathbb R^m$ (see e.g. \cite{BDS,DSS,GM,GuSp}). The differential operator $\partial_{x_0}+\partial_{\underline x}$, called generalized Cauchy-Riemann operator, gives a factorization of the Laplacian, i.e.
\[\Delta_{m+1}=\sum_{j=0}^m\partial_{x_j}^2=(\partial_{x_0}+\partial_{\underline x})(\partial_{x_0}-\partial_{\underline x}).\]
When allowing for complex coefficients, the multiplication rules imposed on the Euclidean basis $\{e_1,\ldots,e_m\}$ will generate the complex Clifford algebra $\mathbb C_m$. Since $\mathbb C_m$ can be seen as the complexification of the real Clifford algebra $\mathbb R_{0,m}$, i.e. $\mathbb C_m=\mathbb R_{0,m}\oplus i\mathbb R_{0,m}$, any complex Clifford number $c\in\mathbb C_m$ may be written as $c=a+ib$, $a,b\in\mathbb R_{0,m}$, leading to the definition of the Hermitian conjugation: $c^{\dagger}=\overline a-i\overline b$. This Hermitian conjugation gives rise to a norm on $\mathbb C_m$ given by 
\[\vert c\vert=\sqrt{[c^{\dagger}c]_0}.\]
The consideration of complexified Clifford algebras over even dimensional spaces leads to the construction of the so-called Witt basis of $\mathbb C_m$. Let $m=2n$, then the Witt basis elements are given by 
\[f_j=\frac{1}{2}(e_j-ie_{n+j}),\quad f_j^{\dagger}=-\frac{1}{2}(e_j+ie_{n+j}), \quad j=1,\dots,n.\] 
They satisfy the Grassmann identities $f_jf_k+f_kf_j=f_j^{\dagger}f_k^{\dagger}+f_k^{\dagger}f_j^{\dagger}=0$ and the duality identities $f_jf_k^{\dagger}+f_k^{\dagger}f_j=\delta_{jk}$, $j,k=1,\dots,n$. Rewriting the vector variable $\underline x$ and the Dirac operator $\partial_{\underline x}$ as 
\[\underline x=\sum_{j=1}^n(x_je_j+x_{n+j}e_{n+j}),\quad\partial_{\underline x}=\sum_{j=1}^n(e_j\partial_{x_j}+e_{n+j}\partial_{x_{n+j}})\]
and expressing them in terms of the Witt basis, they split in a natural way into $\underline x=\underline z-\underline z^{\dagger}$, $\partial_{\underline x}=2(-\partial_{\underline z}+\partial_{\underline z^{\dagger}})$ with 
\[\underline z=\sum_{j=1}^nz_jf_j,\quad\underline z^{\dagger}=(\underline z)^{\dagger}=\sum_{j=1}^n\overline z_jf_j^{\dagger},\quad\partial_{\underline z}=\sum_{j=1}^nf_j^{\dagger}\partial_{z_j},\quad\partial_{\underline z^{\dagger}}=(\partial_{\underline z})^{\dagger}=\sum_{j=1}^nf_j\partial_{\overline z_j},\]
where $n$ complex variables $z_j=x_j+ix_{n+j}$ have been introduced, with complex conjugates $\overline z_j=x_j-ix_{n+j}$, as well as the classical Cauchy-Riemann operators $\partial_{z_j}=\frac{1}{2}(\partial_{x_j}-i\partial_{x_{n+j}})$ and their complex conjugates $\partial_{\overline z_j}=\frac{1}{2}(\partial_{x_j}+i\partial_{x_{n+j}})$ in the respective complex $z_j$-planes, $j=1,\dots,n$. Note that
\begin{equation}\label{factDH}
\Delta_{2n}=\sum_{j=1}^{2n}\partial_{x_j}^2=4\left(\partial_{\underline z}\partial_{\underline z^{\dagger}}+\partial_{\underline z^{\dagger}}\partial_{\underline z}\right),
\end{equation}
which is the dual expression of 
\begin{equation}\label{dualexp}
\vert\underline z\vert^2=\vert\underline z^{\dagger}\vert^2=\underline z\,\underline z^{\dagger}+\underline z^{\dagger}\underline z.
\end{equation}
The above setting has been the starting point for the development of a new branch of Clifford analysis, so-called Hermitian Clifford analysis, giving rise to a function theory related to and even encompassing the several complex variables theory. The Hermitian monogenic ($h$-monogenic) functions are simultaneous null solutions of the operators $\partial_{\underline z}$ and $\partial_{\underline z^{\dagger}}$ (see e.g. \cite{H6A,H6A2,BHS,SaSo}), and in this way they are refining the properties of monogenic functions. They are invariant under the action of the unitary group $\text{U}(n)$.

Let us consider, for the sake of convenience, the $h$-monogenic system in dimension $m=2n+2$:
\begin{equation}\label{hms}
\left\{\begin{aligned}
(f_0^{\dagger}\partial_{z_0}+\partial_{\underline z})f&=0\\
(f_0\partial_{\overline{z}_0}+\partial_{\underline z^{\dagger}})f&=0
\end{aligned}\right.
\end{equation}
where $f:\Omega\subset\mathbb R^{2n+2}\rightarrow\mathbb{C}_{2n+2}$ is a continuously differentiable function and $z_0=x_0+iy_0$. For this system a Cauchy-Kowalevski extension problem was studied in \cite{BHLS}: given $g:\underline\Omega\subset\mathbb R^{2n}\rightarrow\mathbb{C}_{2n+2}$, find a solution $f$ of (\ref{hms}) such that $f\vert_{z_0=0}=g$. However, unlike the monogenic case, not every analytic function $g$ has a $h$-monogenic extension. Indeed, by multiplying the equations in (\ref{hms}) from the left by $f_0^{\dagger}f_0$ and $f_0f_0^{\dagger}$, we may verify that $f$ is a solution of the $h$-monogenic system (\ref{hms}) if and only if  
\begin{equation*}
\left\{\begin{aligned}
(f_0^{\dagger}\partial_{z_0}+f_0^{\dagger}f_0\partial_{\underline z})f&=0\\
f_0f_0^{\dagger}\partial_{\underline z}f&=0\\
(f_0\partial_{\overline z_0}+f_0f_0^{\dagger}\partial_{\underline z^{\dagger}})f&=0\\
f_0^{\dagger}f_0\partial_{\underline z^{\dagger}}f&=0,
\end{aligned}\right.
\end{equation*} 
and from the second and fourth equation of the last system we see that the Cauchy datum $g$ should satisfy the constrains $f_0f_0^{\dagger}\partial_{\underline z}g=f_0^{\dagger}f_0\partial_{\underline z^{\dagger}}g=0$. 

In order to overcome this difficulty we introduced in \cite{NDS1,NDS2} the following definition. 

\begin{defn}
A function $f:\,\Omega\subset\mathbb R^{2n+2}\rightarrow\mathbb{C}_{2n+2}$ is called $h$-submonogenic in $\Omega$ if and only if $f\in C^1(\Omega)$ and 
\begin{equation}\label{weakhms}
\left\{\begin{aligned}
(f_0^{\dagger}\partial_{z_0}+f_0^{\dagger}f_0\partial_{\underline z})f&=0\\
(f_0\partial_{\overline z_0}+f_0f_0^{\dagger}\partial_{\underline z^{\dagger}})f&=0
\end{aligned}\right.
\end{equation} 
or, equivalently, 
\[\mathbb Df=0,\;\text{where}\;\;\mathbb D=f_0\partial_{\overline z_0}+f_0^{\dagger}\partial_{z_0}+f_0f_0^{\dagger}\partial_{\underline z^{\dagger}}+f_0^{\dagger}f_0\partial_{\underline z}.\] 
\end{defn}
\noindent 
We note that $h$-submonogenic functions are invariant under the action of the unitary subgroup $\text{U}(n)$ of $\text{U}(n+1)$. It is remarkable that the Hermitian submonogenic system (\ref{weakhms}) can be written into a single equation, namely $\mathbb Df=0$. For this new system we studied a Cauchy-Kowalevski extension theorem showing that its solutions are determined by their Cauchy data and we solved the system explicitly for the Gaussian and for other special functions as well. In this way, we obtained Hermitian Bessel functions, Hermite polynomials and generalized powers. We also derived a Vekua-type system that describes all axially symmetric solutions. In this paper we aim at obtaining the fundamental solution of the Hermitian submonogenic system and to prove a Cauchy's integral formula for its solutions. 

The paper is organized as follows. In Section \ref{sect2} we recall some basic facts on special functions and study certain special integrals that will be needed in the course of the paper. In particular, we have to evaluate a number of integrals that arise from the application of Funk-Hecke's formula for spherical harmonics.

Section \ref{sect3} begins with a general study of the $h$-submonogenic system. Next we recall the Fourier expansion of the fundamental solution $E(x_0+\underline x)$ of the generalized Cauchy-Riemann operator $\partial_{x_0}+\partial_{\underline x}$ since this inspires the actual construction of the Cauchy kernel for the $h$-submonogenic system. To that end, we start with the study of plane wave exponential solutions $P$ that still depend on a vector parameter $\underline w$ and satisfy the $h$-submonogenic system from both sides: $\mathbb DP=0=P\mathbb D$. 

The Cauchy kernel is then obtained in Section \ref{sect4} by integrating these plane waves with respect to the vector parameter $\underline w$ and what we obtain is a solution to the two-sided $h$-submonogenic system $\mathbb Df=0=f\mathbb D$ with singularities on a half-line: $\{(x_0,\underline x)\in\mathbb R^{2n+1}:\;x_0\le0,\;\underline x=0\}$.

Finally, in Section \ref{sect5} we apply this kernel to arrive at a Cauchy integral representation formula for certain solutions of the $h$-submonogenic system. This result is obtained in three steps. First we establish a Stokes type theorem that couples left and right solutions of the $h$-submonogenic operator $\mathbb D$. Then we develop a Cauchy integral formula using the $x_0$-derivative of the Cauchy kernel; that kernel has a point singularity in the origin and it provides an integral formula for the $x_0$-derivative of a $h$-submonogenic function. By integrating over a half-line we obtain the desired Cauchy integral representation formula.

\section{Some preliminary results}\label{sect2}

In this section we collect a series of facts that will play an important role in the proof of our main result. We begin by recalling the well-known Beta function $\textrm{B}(x,y)$ defined by
\[\textrm{B}(x,y)=\int_0^1t^{x-1}(1-t)^{y-1}dt,\quad x,y>0\]
and its connection with the Gamma function $\Gamma$:
\[\textrm{B}(x,y)=\frac{\Gamma(x)\Gamma(y)}{\Gamma(x+y)}.\]
For positive half-integers, the values of $\Gamma$ are given by
\begin{equation}\label{gann2}
\Gamma\left(\frac{n}{2}\right)=\sqrt{\pi}\frac{(n-2)!!}{2^{(n-1)/2}},
\end{equation}
where $n!!$ denotes the double factorial of $n$. 

We shall also need the following Taylor series at $x=0$:

\begin{equation}\label{serieuno}
\frac{1}{(1-x)^m}=\sum_{k=0}^\infty\binom{k+m-1}{k}x^k,\quad\vert x\vert<1
\end{equation}

\begin{equation}\label{seriedos}
\frac{1}{(1+x)^{\frac{m+1}{2}}}=\sum_{k=0}^\infty\frac{(-1)^k(2k+m-1)!!}{(2k)!!(m-1)!!}\,x^k,\quad\vert x\vert<1.
\end{equation}

\begin{lem}\label{lemintindefS}
For $n\in\mathbb N$ we have
\begin{itemize}
\item[{\rm(i)}] $\displaystyle{\int_0^\infty x^ne^{\alpha x}dx=\frac{(-1)^{n+1}}{\alpha^{n+1}}\,n!},\quad\rm{Re}(\alpha)<0,$
\item[{\rm(ii)}] $\displaystyle{\int\frac{x^{2n-1}}{(1+x^2)^{\frac{2n+1}{2}}}dx=\frac{P_{2n-2}(x)}{(1+x^2)^{\frac{2n-1}{2}}}+C},\quad C\in\mathbb R,$
\item[{\rm(iii)}] $\displaystyle{\sum_{k=0}^\infty\frac{(-1)^k(2k+2n-1)!!}{(2k)!!(2k+2n)(2n-1)!!}\,x^{2k}=\frac{1}{x^{2n}}\left(\frac{P_{2n-2}(x)}{(1+x^2)^{\frac{2n-1}{2}}}+\frac{(2n-2)!!}{(2n-1)!!}\right)},\quad 0<\vert x\vert<1,$
\end{itemize}
where $\displaystyle{P_{2n-2}(x)=\sum_{j=0}^{n-1}a_jx^{2j}}$, $\quad\displaystyle{a_j=-\frac{2^{n-j-1}(n-1)!}{j!(2n-2j-1)!!}}$.
\end{lem}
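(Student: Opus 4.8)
The three parts are essentially independent computations, with (iii) resting on (ii), so I would prove them in the order (i), (ii), (iii). Part (i) is the classical Gamma integral: setting $\beta=-\alpha$, so that $\mathrm{Re}(\beta)>0$, the claim reduces to $\int_0^\infty x^n e^{-\beta x}\,dx=n!/\beta^{n+1}$, which one obtains either by differentiating $\int_0^\infty e^{-\beta x}\,dx=1/\beta$ $n$ times in $\beta$ or by repeated integration by parts; since $(-\alpha)^{-(n+1)}=(-1)^{n+1}\alpha^{-(n+1)}$ this is exactly the stated value.

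For (ii) I would simply differentiate the proposed primitive. Writing $P=P_{2n-2}$, the quotient rule gives
\[\frac{d}{dx}\,\frac{P(x)}{(1+x^2)^{\frac{2n-1}{2}}}=\frac{(1+x^2)P'(x)-(2n-1)\,xP(x)}{(1+x^2)^{\frac{2n+1}{2}}},\]
so (ii) is equivalent to the polynomial identity $(1+x^2)P'(x)-(2n-1)xP(x)=x^{2n-1}$. Inserting $P(x)=\sum_{j=0}^{n-1}a_jx^{2j}$ and comparing the coefficients of $x^{2j+1}$ (all the polynomials involved being odd), this splits into the two-term recurrence $2(j+1)a_{j+1}=(2n-2j-1)a_j$ for $0\le j\le n-2$ together with the single relation $a_{n-1}=-1$ coming from the top coefficient; both are immediate from the closed form $a_j=-2^{n-j-1}(n-1)!/(j!\,(2n-2j-1)!!)$ once one uses $(2n-2j-1)!!=(2n-2j-1)(2n-2j-3)!!$. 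A variant: substitute $t=x^2$, write $t^{n-1}=((1+t)-1)^{n-1}$, expand by the binomial theorem and integrate the powers of $1+t$ one by one.

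For (iii) I would consider the antiderivative $F(x)=\int_0^x t^{2n-1}(1+t^2)^{-\frac{2n+1}{2}}\,dt$ and compute it in two ways for $0<|x|<1$. First, by (ii) and the evaluation $P_{2n-2}(0)=a_0=-(2n-2)!!/(2n-1)!!$ (using $(2n-2)!!=2^{n-1}(n-1)!$), we get $F(x)=P_{2n-2}(x)(1+x^2)^{-\frac{2n-1}{2}}+(2n-2)!!/(2n-1)!!$, which is precisely $x^{2n}$ times the right-hand side of (iii). Second, applying (\ref{seriedos}) with $m=2n$ and argument $t^2$ gives $(1+t^2)^{-\frac{2n+1}{2}}=\sum_{k\ge0}\frac{(-1)^k(2k+2n-1)!!}{(2k)!!(2n-1)!!}\,t^{2k}$ on $(-1,1)$; multiplying by $t^{2n-1}$ and integrating term by term (legitimate by uniform convergence on compact subintervals of $(-1,1)$) yields $F(x)=x^{2n}\sum_{k\ge0}\frac{(-1)^k(2k+2n-1)!!}{(2k)!!(2k+2n)(2n-1)!!}\,x^{2k}$. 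Equating the two expressions for $F(x)/x^{2n}$ proves (iii).

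Nothing here is deep; the only step that calls for attention is the double-factorial bookkeeping in (ii) — verifying the recurrence $2(j+1)a_{j+1}=(2n-2j-1)a_j$ and the boundary value $a_{n-1}=-1$ — together with the accompanying identification $-a_0=(2n-2)!!/(2n-1)!!$ that makes the constant term in (iii) come out right. I expect no genuine obstacle beyond carrying out those simplifications carefully.
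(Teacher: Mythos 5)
Your proposal is correct and follows essentially the same route as the paper: (i) by the standard Gamma-type evaluation, (ii) by reducing the claimed antiderivative to the polynomial identity $(1+x^2)P_{2n-2}'-(2n-1)xP_{2n-2}=x^{2n-1}$ and the attendant two-term recurrence for the $a_j$, and (iii) by integrating the series from (\ref{seriedos}) term by term and matching it against (ii) with the constant fixed by $-a_0=(2n-2)!!/(2n-1)!!$. The only (immaterial) difference is that you verify the closed form of the $a_j$ against the recurrence, whereas the paper solves the recurrence to produce it.
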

\begin{proof}
The proof of (i) is straightforward. Indeed,
\[\int_0^\infty x^ne^{\alpha x}dx=\frac{e^{\alpha x}}{\alpha}\left(\sum_{k=0}^n(-1)^k\frac{n!x^{n-k}}{(n-k)!\alpha^k}\right)\bigg\vert_{x=0}^{x=\infty}=\frac{(-1)^{n+1}}{\alpha^{n+1}}\,n!.\]
Let $P_{2n-2}(x)=\sum_{j=0}^{n-1}a_jx^{2j}$ be an even polynomial of degree $2n-2$. Computing the integral in (ii) for lower values of $n$ we can infer that an antiderivative is given by
\[\frac{P_{2n-2}(x)}{(1+x^2)^{\frac{2n-1}{2}}}.\]
It is easy to verify  that 
\[(1+x^2)P_{2n-2}^\prime-(2n-1)xP_{2n-2}=x^{2n-1},\]
which implies that the coefficients of $P_{2n-2}$ should satisfy
\[2(j+1)a_{j+1}-(2n-2j-1)a_j=0,\quad j=0,\dots,n-2.\]
We can easily solve this recurrence relation to get $\displaystyle{a_{n-j}=-\frac{2^{j-1}(n-1)!}{(n-j)!(2j-1)!!}}$.

From (\ref{seriedos}) it follows that
\[\frac{t^{2n-1}}{(1+t^2)^{\frac{2n+1}{2}}}=\sum_{k=0}^\infty\frac{(-1)^k(2k+2n-1)!!}{(2k)!!(2n-1)!!}\,t^{2k+2n-1},\quad n\in\mathbb N.\]
Integrating on both sides leads to
\[\int_0^x\frac{t^{2n-1}}{(1+t^2)^{\frac{2n+1}{2}}}dt=\sum_{k=0}^\infty\frac{(-1)^k(2k+2n-1)!!}{(2k)!!(2k+2n)(2n-1)!!}\,x^{2k+2n},\]
and using (ii) we obtain
\begin{equation*}
\frac{1}{x^{2n}}\left(\frac{P_{2n-2}(x)}{(1+x^2)^{\frac{2n-1}{2}}}+\frac{(2n-2)!!}{(2n-1)!!}\right)=\sum_{k=0}^\infty\frac{(-1)^k(2k+2n-1)!!}{(2k)!!(2k+2n)(2n-1)!!}\,x^{2k}
\end{equation*}
as desired.
\end{proof}

\begin{lem}\label{tresintegrales}
Let $r=\vert\underline x\vert$. If $x_0>r\ne0$, then
\begin{itemize}
\item[{\rm(i)}] $\displaystyle{\int_{-1}^{1}\frac{(1-t^2)^{\frac{m-3}{2}}}{(x_0-irt)^m}dt=\frac{\sqrt{2\pi}(m-3)!!x_0}{(m-2)!!(x_0^2+r^2)^{\frac{m+1}{2}}}}$
\item[{\rm(ii)}] $\displaystyle{\int_{-1}^{1}\frac{t(1-t^2)^{\frac{m-3}{2}}}{(x_0-irt)^m}dt=\frac{i\sqrt{2\pi}(m-3)!!r}{(m-2)!!(x_0^2+r^2)^{\frac{m+1}{2}}}}$ 
\item[{\rm(iii)}] $\displaystyle{\int_{-1}^{1}\frac{t^2(1-t^2)^{\frac{2n-3}{2}}}{(x_0-irt)^{2n}}dt=\frac{\sqrt{2\pi}x_0\sum_{j=0}^nb_jx_0^{2n-2j}r^{2j}}{(2n-2)!!r^{2n}(x_0^2+r^2)^\frac{2n+1}{2}}-\frac{\sqrt{2\pi}}{r^{2n}}},$ 
\end{itemize}
where $\displaystyle{b_n=2n(2n-3)!!,\quad b_j=\frac{(2n-2)!!(2n+1)!!}{(2j)!!(2n-2j+1)!!},\quad j=0,\dots,n-1}$.
\end{lem}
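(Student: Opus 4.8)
\medskip
\noindent\emph{Proof idea.} The plan is to compute all three integrals by a single device. Since $x_0>r>0$, for every $t\in[-1,1]$ we have $\bigl|\frac{irt}{x_0}\bigr|\le\frac{r}{x_0}<1$, so by (\ref{serieuno})
\[\frac{1}{(x_0-irt)^m}=\frac{1}{x_0^m}\sum_{k=0}^{\infty}\binom{k+m-1}{k}\Bigl(\frac{irt}{x_0}\Bigr)^{k},\]
the series converging uniformly on $[-1,1]$; hence it may be integrated term by term against the bounded weight $(1-t^2)^{\frac{m-3}{2}}$ (possibly times $t$ or $t^2$). The nonzero moments are the classical Beta integrals $\int_{-1}^{1}t^{2l}(1-t^2)^{\frac{m-3}{2}}\,dt=\mathrm{B}\bigl(l+\tfrac12,\tfrac{m-1}{2}\bigr)$; after rewriting the Gamma quotients by means of (\ref{gann2}), the resulting power series in $r^{2}/x_0^{2}$ are resummed with the help of (\ref{seriedos}) and, for (iii), the identity of Lemma \ref{lemintindefS}(iii).

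\smallskip
\noindent For (i) only the even terms $k=2l$ survive, and a short manipulation shows that the coefficient of $(-1)^{l}(r/x_0)^{2l}$ equals $\mathrm{B}\bigl(\tfrac12,\tfrac{m-1}{2}\bigr)\,\frac{(2l+m-1)!!}{(2l)!!\,(m-1)!!}$; comparing with (\ref{seriedos}) (exponent $\tfrac{m+1}{2}$, variable $r^{2}/x_0^{2}$) sums the series to $\mathrm{B}\bigl(\tfrac12,\tfrac{m-1}{2}\bigr)\,x_0\,(x_0^{2}+r^{2})^{-\frac{m+1}{2}}$, and (\ref{gann2}) rewrites the constant $\mathrm{B}\bigl(\tfrac12,\tfrac{m-1}{2}\bigr)$ in the stated double-factorial form. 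For (ii) the factor $t$ annihilates the even powers and keeps the odd ones $k=2l+1$; by an elementary Gamma identity the new coefficient sequence coincides with that of (i), while the factors $i^{2l+1}r^{2l+1}$ replace the single $x_0$ of (i) by $ir$, giving (ii). (Both (i) and (ii) may instead be deduced from the substitution $t=2s-1$, which turns the denominator into $\bigl((x_0+ir)(1-s)+(x_0-ir)s\bigr)^{m}$, via the classical identity $\int_0^1 s^{A-1}(1-s)^{B-1}\bigl(\lambda s+\mu(1-s)\bigr)^{-A-B}ds=\mathrm{B}(A,B)\,\lambda^{-A}\mu^{-B}$ together with one contiguity step absorbing the gap between the power $m$ and $A+B=m-1$.)

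\smallskip
\noindent For (iii) I would write $t^{2}=1-(1-t^{2})$, so that the integral splits as
\[\int_{-1}^{1}\frac{(1-t^{2})^{\frac{2n-3}{2}}}{(x_0-irt)^{2n}}\,dt-\int_{-1}^{1}\frac{(1-t^{2})^{\frac{2n-1}{2}}}{(x_0-irt)^{2n}}\,dt,\]
the first summand being case (i) with $m=2n$. For the second summand $K$ the same term-by-term computation (moments $\mathrm{B}\bigl(l+\tfrac12,\tfrac{2n+1}{2}\bigr)$) reduces, after (\ref{gann2}), to an explicit constant times $\sum_{l\ge0}\frac{(-1)^{l}(2l+2n-1)!!}{(2l)!!\,(2l+2n)}\bigl(r/x_0\bigr)^{2l}$, which is — up to that constant — exactly the series of Lemma \ref{lemintindefS}(iii). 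Inserting its closed form $\frac{1}{x^{2n}}\bigl(\frac{P_{2n-2}(x)}{(1+x^{2})^{(2n-1)/2}}+\frac{(2n-2)!!}{(2n-1)!!}\bigr)$ with $x=r/x_0$, the constant part $\frac{(2n-2)!!}{(2n-1)!!}$ produces the isolated term $-(\text{const})/r^{2n}$, while the $P_{2n-2}$-part, recombined with case (i) over the common denominator $(x_0^{2}+r^{2})^{\frac{2n+1}{2}}$, yields $x_0\sum_{j=0}^{n}(\,\cdot\,)\,x_0^{2n-2j}r^{2j}$. It then remains to check that these coefficients are the $b_j$ of the statement: in terms of the explicit $a_j$ from Lemma \ref{lemintindefS} one finds $b_0=-(2n-1)!!\,a_0$, $b_j=-(2n-1)!!\,(a_j+a_{j-1})$ for $1\le j\le n-1$, and $b_n=(2n-3)!!+(2n-1)!!=2n(2n-3)!!$, and each of these reduces to the stated closed form, the key step being $a_j+a_{j-1}=-\frac{2^{n-j-1}(n-1)!\,(2n+1)}{j!\,(2n-2j+1)!!}$.

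\smallskip
\noindent The main obstacle is precisely this last bookkeeping for (iii): correctly attributing the two pieces of the closed form of Lemma \ref{lemintindefS}(iii) (the genuine rational part versus the residual constant), recombining the fractions over $(x_0^{2}+r^{2})^{\frac{2n+1}{2}}$ with the right signs, and verifying the telescoping identification $a_j+a_{j-1}\leftrightarrow b_j$. Everything else — the convergence justification, the Beta evaluations, and the simplifications through (\ref{gann2}), (\ref{serieuno}), (\ref{seriedos}) — is routine special-function algebra.
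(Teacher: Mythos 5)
Your proposal is correct and follows essentially the same route as the paper: expand $(x_0-irt)^{-m}$ by (\ref{serieuno}), evaluate the surviving even moments as Beta integrals via (\ref{gann2}), and resum the resulting series using (\ref{seriedos}) and Lemma \ref{lemintindefS}(iii). Your split $t^2=1-(1-t^2)$ in (iii) is just an integrand-level version of the paper's coefficient identity $\frac{2k+1}{2k+2n}=(2n-1)\bigl(\frac{1}{2n-1}-\frac{1}{2k+2n}\bigr)$ (it produces exactly the same two series), and your final bookkeeping $b_0=-(2n-1)!!\,a_0$, $b_j=-(2n-1)!!\,(a_j+a_{j-1})$, $b_n=(2n-3)!!+(2n-1)!!=2n(2n-3)!!$ checks out against the stated closed forms.
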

\begin{proof}
We only prove identities (i) and (iii). The proof of (ii) is similar to that of (i). Using (\ref{serieuno}) we obtain 
\[\int_{-1}^{1}\frac{(1-t^2)^{\frac{m-3}{2}}}{(x_0-irt)^m}dt=\frac{1}{x_0^m}\int_{-1}^{1}\frac{(1-t^2)^{\frac{m-3}{2}}}{\left(1-\frac{ir}{x_0}t\right)^m}dt=\frac{1}{x_0^m}\sum_{k=0}^\infty\binom{k+m-1}{k}\left(\frac{ir}{x_0}\right)^kI_k,\]
where $I_k=\displaystyle{\int_{-1}^{1}t^k(1-t^2)^{\frac{m-3}{2}}dt}$. Taking into account the parity of the integrand, we have
\[I_{2k}=2\int_{0}^{1}t^{2k}(1-t^2)^{\frac{m-3}{2}}dt,\quad I_{2k+1}=0.\]
Making the change of variables $x=t^2$ leads to
\begin{align*}
I_{2k}&=\int_{0}^{1}x^{k-\frac{1}{2}}(1-x)^{\frac{m-3}{2}}dx=\textrm{B}\left(k+\frac{1}{2},\frac{m-1}{2}\right)\\
&=\frac{\Gamma\left(k+\frac{1}{2}\right)\Gamma\left(\frac{m-1}{2}\right)}{\Gamma\left(k+\frac{m}{2}\right)}=\sqrt{2\pi}\,\frac{(2k-1)!!(m-3)!!}{(2k+m-2)!!},
\end{align*}
where we have also used (\ref{gann2}). By the above relations and using (\ref{seriedos}) we get
\begin{align*}
\int_{-1}^{1}\frac{(1-t^2)^{\frac{m-3}{2}}}{(x_0-irt)^m}dt&=\frac{\sqrt{2\pi}(m-3)!!}{(m-2)!!x_0^m}\sum_{k=0}^\infty\frac{(-1)^k(2k+m-1)!!}{(2k)!!(m-1)!!}\left(\frac{r}{x_0}\right)^{2k}\\
&=\frac{\sqrt{2\pi}(m-3)!!}{(m-2)!!x_0^m\left(1+\left(\frac{r}{x_0}\right)^2\right)^{\frac{m+1}{2}}},
\end{align*}
from which the first identity follows.

Using similar arguments we also get that
\[\int_{-1}^{1}\frac{t^2(1-t^2)^{\frac{2n-3}{2}}}{(x_0-irt)^{2n}}dt=\frac{1}{x_0^{2n}}\sum_{k=0}^\infty\binom{2k+2n-1}{2k}\left(\frac{ir}{x_0}\right)^{2k}Q_{k},\]
where
\begin{align*}
Q_k=\int_{-1}^{1}t^{2k+2}(1-t^2)^{\frac{2n-3}{2}}dt&=\textrm{B}\left(k+\frac{3}{2},\frac{2n-1}{2}\right)\\
&=\sqrt{2\pi}\,\frac{(2k+1)!!(2n-3)!!}{(2k+2n)!!}.
\end{align*}
Therefore
\[\int_{-1}^{1}\frac{t^2(1-t^2)^{\frac{2n-3}{2}}}{(x_0-irt)^{2n}}dt=\frac{\sqrt{2\pi}}{(2n-1)(2n-2)!!x_0^{2n}}\sum_{k=0}^\infty\frac{(-1)^k(2k+1)(2k+2n-1)!!}{(2k)!!(2k+2n)}\left(\frac{r}{x_0}\right)^{2k}\]
\[\qquad\qquad=\frac{\sqrt{2\pi}(2n-3)!!}{(2n-2)!!x_0^{2n}}\sum_{k=0}^\infty\frac{(-1)^k(2k+2n-1)!!}{(2k)!!(2n-1)!!}\left(\frac{r}{x_0}\right)^{2k}\]
\[\qquad\qquad\qquad-\frac{\sqrt{2\pi}(2n-1)!!}{(2n-2)!!x_0^{2n}}\sum_{k=0}^\infty\frac{(-1)^k(2k+2n-1)!!}{(2k)!!(2k+2n)(2n-1)!!}\left(\frac{r}{x_0}\right)^{2k}.\]
Using (\ref{seriedos}) and (iii) of Lemma \ref{lemintindefS} we obtain
\[\int_{-1}^{1}\frac{t^2(1-t^2)^{\frac{2n-3}{2}}}{(x_0-irt)^{2n}}dt=\frac{\sqrt{2\pi}(2n-3)!!}{(2n-2)!!x_0^{2n}\left(1+\left(\frac{r}{x_0}\right)^2\right)^{\frac{2n+1}{2}}}\] 
\[-\frac{\sqrt{2\pi}(2n-1)!!}{(2n-2)!!r^{2n}}\left(\frac{P_{2n-2}\left(\frac{r}{x_0}\right)}{\left(1+\left(\frac{r}{x_0}\right)^2\right)^{\frac{2n-1}{2}}}+\frac{(2n-2)!!}{(2n-1)!!}\right)=\frac{\sqrt{2\pi}x_0\sum_{j=0}^nb_jx_0^{2n-2j}r^{2j}}{(2n-2)!!r^{2n}(x_0^2+r^2)^\frac{2n+1}{2}}-\frac{\sqrt{2\pi}}{r^{2n}},\]
which is the desired conclusion.
\end{proof}

\begin{thm}[Funk-Hecke's formula \cite{Hoch}]
Suppose that $\displaystyle{\int_{-1}^1\vert F(t)\vert(1-t^2)^{(p-3)/2}dt<\infty}$ and let $\underline\xi\in S^{p-1}$. If $Y_k(\underline x)$ is a spherical harmonic of degree $k$ in $\mathbb R^p$, then 
\[\int_{S^{p-1}}F(\langle\underline\xi,\underline\eta\rangle)Y_k(\underline\eta)dS(\underline\eta)=\sigma_{p-1}C_k(1)^{-1}Y_{k}(\underline\xi)\int_{-1}^1F(t)C_k(t)(1-t^2)^{(p-3)/2}dt,\]
where $C_k(t)$ denotes the Gegenbauer polynomial $C^{\lambda}_k(t)$ with $\lambda=(p-2)/2$ and $\sigma_{p-1}=\frac{2\pi^{\frac{p-1}{2}}}{\Gamma\left(\frac{p-1}{2}\right)}$ is the surface area of the unit sphere $S^{p-2}$ in $\mathbb R^{p-1}$.
\end{thm}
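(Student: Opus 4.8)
The plan is to prove this along the classical route: reduce the stated identity to the reproducing kernel of the space of degree-$k$ spherical harmonics, use rotational symmetry to pin down the shape of that kernel, and then fix the proportionality constant by a one-dimensional ``slicing'' integration on the sphere. Write $\lambda=(p-2)/2$ and let $\mathcal H_k$ be the finite-dimensional space of spherical harmonics of degree $k$ on $S^{p-1}$, endowed with the $L^2(S^{p-1})$ inner product. First I would record the elementary slicing formula: for any $g$ with $g(\langle\underline\xi,\cdot\rangle)\in L^1(S^{p-1})$,
\[\int_{S^{p-1}}g(\langle\underline\xi,\underline\eta\rangle)\,dS(\underline\eta)=\sigma_{p-1}\int_{-1}^1 g(t)(1-t^2)^{\frac{p-3}{2}}\,dt,\]
obtained by writing $\underline\eta=t\underline\xi+\sqrt{1-t^2}\,\underline\zeta$ with $\underline\zeta\in S^{p-2}$ orthogonal to $\underline\xi$. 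In particular the hypothesis $\int_{-1}^1|F(t)|(1-t^2)^{(p-3)/2}\,dt<\infty$ shows $\underline\eta\mapsto F(\langle\underline\xi,\underline\eta\rangle)$ lies in $L^1(S^{p-1})$, so all integrals below converge absolutely, the polynomials $Y_k$ and $C_k$ being bounded on the sphere.

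Next I would introduce, for fixed $\underline\xi\in S^{p-1}$, the reproducing kernel $Z_k^{\underline\xi}\in\mathcal H_k$ characterised by $Y_k(\underline\xi)=\int_{S^{p-1}}Z_k^{\underline\xi}(\underline\eta)Y_k(\underline\eta)\,dS(\underline\eta)$ for all $Y_k\in\mathcal H_k$, which exists because evaluation at $\underline\xi$ is a linear functional on a finite-dimensional inner product space. The key step is to show that the $L^2$-orthogonal projection of $\underline\eta\mapsto F(\langle\underline\xi,\underline\eta\rangle)$ onto $\mathcal H_k$ is a scalar multiple $\mu_k Z_k^{\underline\xi}$, with $\mu_k$ independent of $\underline\xi$. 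This follows from symmetry: since $\langle R\underline\xi,R\underline\eta\rangle=\langle\underline\xi,\underline\eta\rangle$ for $R\in O(p)$ and $dS$ is rotation invariant, and since the projection onto $\mathcal H_k$ commutes with the $O(p)$-action, the projected function $G_{\underline\xi}$ transforms equivariantly and is in particular invariant under the stabiliser of $\underline\xi$; but the subspace of $\mathcal H_k$ fixed by that stabiliser is one-dimensional and spanned by $Z_k^{\underline\xi}$ (equivalently, by the zonal harmonic $\underline\eta\mapsto C_k^\lambda(\langle\underline\xi,\underline\eta\rangle)$). Granting this, orthogonality of spherical harmonics of distinct degrees gives, for $Y_k\in\mathcal H_k$,
\[\int_{S^{p-1}}F(\langle\underline\xi,\underline\eta\rangle)Y_k(\underline\eta)\,dS(\underline\eta)=\mu_k\int_{S^{p-1}}Z_k^{\underline\xi}(\underline\eta)Y_k(\underline\eta)\,dS(\underline\eta)=\mu_k\,Y_k(\underline\xi),\]
which is already the asserted shape; to handle the case $F\in L^1\setminus L^2$ cleanly I would approximate $F$ in the weighted $L^1$-norm by bounded functions and pass to the limit.

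It then remains to evaluate $\mu_k$. Since $\underline\eta\mapsto C_k^\lambda(\langle\underline\xi,\underline\eta\rangle)$ is itself a spherical harmonic of degree $k$ (Gegenbauer polynomials of parameter $(p-2)/2$ being exactly the zonal harmonics on $\mathbb R^p$), I would feed $Y_k(\underline\eta)=C_k^\lambda(\langle\underline\xi,\underline\eta\rangle)$ into the identity just obtained and use $C_k^\lambda(\langle\underline\xi,\underline\xi\rangle)=C_k^\lambda(1)$ to get
\[\int_{S^{p-1}}F(\langle\underline\xi,\underline\eta\rangle)C_k^\lambda(\langle\underline\xi,\underline\eta\rangle)\,dS(\underline\eta)=\mu_k\,C_k^\lambda(1),\]
while the slicing formula rewrites the left-hand side as $\sigma_{p-1}\int_{-1}^1 F(t)C_k^\lambda(t)(1-t^2)^{(p-3)/2}\,dt$; dividing gives $\mu_k=\sigma_{p-1}C_k(1)^{-1}\int_{-1}^1 F(t)C_k(t)(1-t^2)^{(p-3)/2}\,dt$, which is precisely the stated constant. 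I expect the main obstacle to be the two structural facts invoked in the key step — that the stabiliser-invariant part of $\mathcal H_k$ is one-dimensional, and that this invariant is a constant multiple of the Gegenbauer polynomial $C_k^{(p-2)/2}$, i.e. the addition theorem for spherical harmonics — everything else being either bookkeeping (the slicing formula, orthogonality of harmonics) or soft equivariance. Since the paper only needs the statement and invokes \cite{Hoch}, in practice I would simply cite it, but the above is the argument I would reconstruct.
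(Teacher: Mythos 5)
The paper does not prove this statement at all: it is quoted verbatim as a classical result with a citation to Hochstadt's book, so there is no ``paper proof'' to compare against. Your reconstruction is the standard and correct argument --- slicing formula for zonal integrands, equivariance of the projection onto $\mathcal H_k$, one-dimensionality of the stabiliser-invariant subspace and its identification with the Gegenbauer zonal harmonic $C_k^{(p-2)/2}(\langle\underline\xi,\cdot\rangle)$, then normalisation by testing against the zonal harmonic itself --- and the two structural inputs you flag are indeed exactly the addition theorem, which is the only nontrivial content. (The only caveat, irrelevant to how the paper uses the formula for $n\ge 2$, is the degenerate normalisation $\lambda=0$ when $p=2$, where $C_k^0$ must be renormalised before writing $C_k(1)^{-1}$.)
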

\noindent
We recall that the  Gegenbauer polynomial $C^{\lambda}_k(t)$ are orthogonal polynomials on the interval $[-1,1]$ with respect to the weight function $(1-t^2)^{\lambda-1/2}$ and satisfy the recurrence relation
\begin{align*}
C^{\lambda}_k(t)&=\frac{1}{k}\left(2(k+\lambda-1)t\,C^{\lambda}_{k-1}(t)-(k+2\lambda-2)C^{\lambda}_{k-2}(t)\right),\quad k\ge2,\\
C^{\lambda}_0(t)&=1,\qquad C^{\lambda}_1(t)=2\lambda t.
\end{align*}
Let $\mathsf{P}(k)$ be the set of all homogeneous polynomials of degree $k$ defined in $\mathbb R^p$. By $\mathsf{H}(k)$ we denote the polynomials in $\mathsf{P}(k)$ which are harmonic.

\begin{thm}[Fischer decomposition \cite{DSS}]\label{Fischer}
If $P_k(\underline x)\in\mathsf{P}(k)$, with $k\ge2$, then there exist unique polynomials $H_k(\underline x)\in\mathsf{H}(k)$ and $P_{k-2}(\underline x)\in\mathsf{P}(k-2)$ such that 
\[P_k(\underline x)=H_k(\underline x)+\vert\underline x\vert^2P_{k-2}(\underline x).\]
\end{thm}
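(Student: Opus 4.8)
The plan is to turn the statement into a piece of finite‑dimensional linear algebra by equipping each $\mathsf{P}(j)$ with the \emph{Fischer inner product}: for $P(\underline x)=\sum_{|\alpha|=j}a_\alpha\underline x^\alpha$ and $Q(\underline x)=\sum_{|\alpha|=j}b_\alpha\underline x^\alpha$ put $\langle P,Q\rangle=\sum_{|\alpha|=j}\alpha!\,a_\alpha b_\alpha$, which equals $\bigl(P(\partial_{x_1},\dots,\partial_{x_p})\,Q\bigr)$, a scalar because a constant‑coefficient differential operator of order $j$ applied to a homogeneous polynomial of degree $j$ produces a number. This pairing is symmetric and positive definite, so each $\mathsf{P}(j)$ becomes a finite‑dimensional Euclidean space (for $\mathbb C$‑ or Clifford‑valued polynomials one inserts a conjugation and argues componentwise, nothing changes).

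The one computation with content is that multiplication by $\vert\underline x\vert^2$ and the Laplacian are mutually adjoint: for $P\in\mathsf{P}(k-2)$ and $Q\in\mathsf{P}(k)$,
\[\langle\vert\underline x\vert^2P,\,Q\rangle=\langle P,\,\Delta Q\rangle.\]
Indeed, the assignment $R\mapsto R(\partial_{x_1},\dots,\partial_{x_p})$ is a ring homomorphism from the polynomial ring into the algebra of constant‑coefficient differential operators, so the operator attached to $\vert\underline x\vert^2P=\bigl(\sum_j x_j^2\bigr)P$ is $\Delta\circ P(\partial_{x_1},\dots,\partial_{x_p})$; applying it to $Q$ and using that $\Delta$ commutes with $P(\partial)$ gives $\langle\vert\underline x\vert^2P,Q\rangle=\bigl(\Delta P(\partial)Q\bigr)=\bigl(P(\partial)\Delta Q\bigr)=\langle P,\Delta Q\rangle$.

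From here the proof is pure bookkeeping with orthogonal complements. The map $M\colon\mathsf{P}(k-2)\to\mathsf{P}(k)$, $M(P)=\vert\underline x\vert^2P$, is injective because the polynomial ring is an integral domain, so $V:=\vert\underline x\vert^2\mathsf{P}(k-2)$ is a subspace of $\mathsf{P}(k)$. By the adjointness relation and the nondegeneracy of the Fischer pairing on $\mathsf{P}(k-2)$, a polynomial $Q\in\mathsf{P}(k)$ lies in $V^{\perp}$ iff $\langle P,\Delta Q\rangle=0$ for all $P\in\mathsf{P}(k-2)$, i.e.\ iff $\Delta Q=0$; hence $V^{\perp}=\mathsf{H}(k)$. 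Since $\mathsf{P}(k)$ is finite dimensional with a positive definite inner product, $\mathsf{P}(k)=V\oplus V^{\perp}=\vert\underline x\vert^2\mathsf{P}(k-2)\oplus\mathsf{H}(k)$, which is exactly the claimed decomposition $P_k=H_k+\vert\underline x\vert^2P_{k-2}$. Uniqueness is immediate: if $H_k+\vert\underline x\vert^2P_{k-2}=H_k'+\vert\underline x\vert^2P_{k-2}'$ then $H_k-H_k'=\vert\underline x\vert^2(P_{k-2}'-P_{k-2})\in\mathsf{H}(k)\cap V=\{0\}$, so $H_k=H_k'$, and $P_{k-2}=P_{k-2}'$ by injectivity of $M$.

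The genuine obstacle — and essentially the only nontrivial point — is establishing the adjointness of $\vert\underline x\vert^2$ and $\Delta$ together with the positive definiteness of the Fischer pairing, since the latter is what lets us pass from ``$\langle P,\Delta Q\rangle=0$ for every $P$'' to ``$\Delta Q=0$''. Everything else is standard Euclidean‑space geometry. One could instead avoid inner products by showing directly that $P\mapsto\Delta(\vert\underline x\vert^2P)$ is invertible on $\mathsf{P}(k-2)$, using $\Delta(\vert\underline x\vert^2P)=\vert\underline x\vert^2\Delta P+2(2j+p)P$ for $P$ homogeneous of degree $j$, and then defining $P_{k-2}$ as the solution of $\Delta(\vert\underline x\vert^2P_{k-2})=\Delta P_k$; but proving that invertibility costs about as much as the argument above, so I would present the Fischer‑inner‑product version.
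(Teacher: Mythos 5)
Your proof is correct. Note that the paper itself gives no proof of this statement---it is quoted as a known theorem with a citation to the reference [DSS]---so there is no in-paper argument to compare against; the Fischer-inner-product argument you present (positive definiteness of $\langle P,Q\rangle=P(\partial)Q$, adjointness of multiplication by $\vert\underline x\vert^{2}$ and $\Delta$, and the resulting orthogonal decomposition $\mathsf{P}(k)=\mathsf{H}(k)\oplus\vert\underline x\vert^{2}\mathsf{P}(k-2)$) is the standard proof and is essentially the one found in the cited source.
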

\noindent
Finally, it is useful to recall the following well-known formula for computing multiple integrals in polar coordinates:
\begin{equation}\label{fintesfecorde}
\int_{\mathbb R^p}f(\underline x)dV(\underline x)=\int_0^\infty r^{p-1}\left(\int_{S^{p-1}} f(r\underline\omega)dS(\underline\omega)\right)dr.
\end{equation}

\section{Plane wave $h$-submonogenic functions}\label{sect3}

We begin by observing that any $\mathbb{C}_{2n+2}$-valued function $f$ may be uniquely written into the form
\[f=A+f_0B+f_0^{\dagger}C+f_0^{\dagger}f_0D,\]
where $A$, $B$, $C$, $D$ are $\mathbb{C}_{2n}$-valued functions. A direct computation then yields
\begin{align*}
(f_0^{\dagger}\partial_{z_0}+f_0^{\dagger}f_0\partial_{\underline z})f&=f_0^{\dagger}(\partial_{z_0}A-\partial_{\underline z}C)+f_0^{\dagger}f_0\big(\partial_{z_0}B+\partial_{\underline z}(A+D)\big),\\
(f_0\partial_{\overline z_0}+f_0f_0^{\dagger}\partial_{\underline z^{\dagger}})f&=f_0\big(\partial_{\overline z_0}(A+D)-\partial_{\underline z^{\dagger}}B\big)+f_0f_0^{\dagger}(\partial_{\underline z^{\dagger}}A+\partial_{\overline z_0}C).
\end{align*}
Therefore the $h$-submonogenic system (\ref{weakhms}) is equivalent to the following systems of equations
\begin{equation}\label{whmFs1}
\left\{\begin{aligned}
\partial_{z_0}A&=\partial_{\underline z}C\\
\partial_{\underline z^{\dagger}}A&=-\partial_{\overline z_0}C
\end{aligned}\right.
\end{equation}
\begin{equation}\label{whmFs2}
\left\{\begin{aligned}
\partial_{z_0}B&=-\partial_{\underline z}(A+D)\\
\partial_{\underline z^{\dagger}}B&=\partial_{\overline z_0}(A+D).
\end{aligned}\right.
\end{equation}

\begin{rem}
We may also consider the dual equation $f\mathbb D=0$ and if we now rewrite function $f$ as   
\[f=\mathsf{A}+\mathsf{B}f_0+\mathsf{C}f_0^{\dagger}+\mathsf{D}f_0^{\dagger}f_0,\]
it follows that $f\mathbb D=0$ is fulfilled if and only if
\begin{equation*}
\left\{\begin{aligned}
\partial_{\overline z_0}\mathsf{A}&=\mathsf{B}\partial_{\underline z}\\
\mathsf{A}\partial_{\underline z^{\dagger}}&=-\partial_{z_0}\mathsf{B}
\end{aligned}\right.
\end{equation*}
\begin{equation*}
\left\{\begin{aligned}
\partial_{\overline z_0}\mathsf{C}&=-(\mathsf{A}+\mathsf{D})\partial_{\underline z}\\
\mathsf{C}\partial_{\underline z^{\dagger}}&=\partial_{z_0}(\mathsf{A}+\mathsf{D}).
\end{aligned}\right.
\end{equation*}
\end{rem}

\begin{prop}
Every $h$-submonogenic function $f$ satisfies the equation  
\begin{equation*}
\Delta_{2}(\Delta_{2}+\Delta_{2n})f=0,
\end{equation*}
where $\Delta_2=\partial_{x_0}^2+\partial_{y_0}^2$. 
\end{prop}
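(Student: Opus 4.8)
\noindent
The plan is to push everything down to the scalar fourth-order operator $\Delta_2(\Delta_2+\Delta_{2n})$ acting on the four $\mathbb C_{2n}$-valued components of $f$. Write $f=A+f_0B+f_0^{\dagger}C+f_0^{\dagger}f_0D$ as above; since $\Delta_2$ and $\Delta_{2n}$ are scalar differential operators, they commute with left multiplication by the constant multivectors $f_0$, $f_0^{\dagger}$, $f_0^{\dagger}f_0$, so it is enough to prove $\Delta_2(\Delta_2+\Delta_{2n})g=0$ for each $g\in\{A,B,C,A+D\}$ (the case $g=D$ then follows by subtracting the results for $A$ and $A+D$). The whole argument is organized around the two second-order operators
\[\square_{+}=\partial_{z_0}\partial_{\overline z_0}+\partial_{\underline z}\partial_{\underline z^{\dagger}},\qquad\square_{-}=\partial_{z_0}\partial_{\overline z_0}+\partial_{\underline z^{\dagger}}\partial_{\underline z}.\]

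First I would read off from the decoupled systems (\ref{whmFs1}) and (\ref{whmFs2}) that each component is annihilated by $\square_{+}$ or by $\square_{-}$. From (\ref{whmFs1}): applying $\partial_{\overline z_0}$ to $\partial_{z_0}A=\partial_{\underline z}C$ and substituting $\partial_{\overline z_0}C=-\partial_{\underline z^{\dagger}}A$ gives $\square_{+}A=0$; applying $\partial_{\underline z^{\dagger}}$ to $\partial_{z_0}A=\partial_{\underline z}C$, applying $\partial_{z_0}$ to $\partial_{\underline z^{\dagger}}A=-\partial_{\overline z_0}C$, and comparing the two identities gives $\square_{-}C=0$. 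In exactly the same way (\ref{whmFs2}) yields $\square_{+}B=0$ and $\square_{-}(A+D)=0$. All that is used here is that $\partial_{z_0}$ and $\partial_{\overline z_0}$ commute with $\partial_{\underline z}$ and $\partial_{\underline z^{\dagger}}$ (different variables, constant Witt coefficients).

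The core of the proof is the operator identity
\[\square_{-}\square_{+}=\square_{+}\square_{-}=\frac{1}{16}\,\Delta_2(\Delta_2+\Delta_{2n}),\]
which I would establish from three facts: $4\partial_{z_0}\partial_{\overline z_0}=\Delta_2$ (because $z_0=x_0+iy_0$); the Laplacian factorization (\ref{factDH}) in the form $\partial_{\underline z}\partial_{\underline z^{\dagger}}+\partial_{\underline z^{\dagger}}\partial_{\underline z}=\frac{1}{4}\Delta_{2n}$; and the square-zero relations $(\partial_{\underline z})^{2}=0=(\partial_{\underline z^{\dagger}})^{2}$, which are immediate from the Grassmann identities for the Witt basis. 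Indeed, when $\square_{+}\square_{-}$ is expanded, the term $\partial_{\underline z}\partial_{\underline z^{\dagger}}\partial_{\underline z^{\dagger}}\partial_{\underline z}$ drops out because $(\partial_{\underline z^{\dagger}})^{2}=0$, the term $(\partial_{z_0}\partial_{\overline z_0})^{2}$ survives, and---using once more that $\partial_{z_0},\partial_{\overline z_0}$ commute with $\partial_{\underline z},\partial_{\underline z^{\dagger}}$---the two cross terms collapse to $\partial_{z_0}\partial_{\overline z_0}\bigl(\partial_{\underline z}\partial_{\underline z^{\dagger}}+\partial_{\underline z^{\dagger}}\partial_{\underline z}\bigr)=\frac{1}{4}\Delta_{2n}\,\partial_{z_0}\partial_{\overline z_0}$; the same square-zero relations give $[\square_{+},\square_{-}]=[\partial_{\underline z}\partial_{\underline z^{\dagger}},\partial_{\underline z^{\dagger}}\partial_{\underline z}]=0$. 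Granting this identity, I would finish by applying $\square_{-}$ to $\square_{+}A=0$ and to $\square_{+}B=0$, and $\square_{+}$ to $\square_{-}C=0$ and to $\square_{-}(A+D)=0$: this gives $\Delta_2(\Delta_2+\Delta_{2n})g=0$ for $g\in\{A,B,C,A+D\}$, hence also for $g=D$, and therefore $\Delta_2(\Delta_2+\Delta_{2n})f=0$.

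I do not anticipate a real obstacle here: the argument is a short chain of first- and second-order manipulations. The one point that needs attention is the operator identity of the previous paragraph---one has to respect that $\partial_{\underline z}$ and $\partial_{\underline z^{\dagger}}$ do not commute, and it is precisely the square-zero relations $(\partial_{\underline z})^{2}=(\partial_{\underline z^{\dagger}})^{2}=0$, and not a premature replacement of $\partial_{\underline z}\partial_{\underline z^{\dagger}}+\partial_{\underline z^{\dagger}}\partial_{\underline z}$ by $\frac{1}{4}\Delta_{2n}$, that produce the telescoping; everything else is bookkeeping, together with the observation that $\Delta_2$ and $\Delta_{2n}$, being scalar, pass through the constant Witt products $f_0,f_0^{\dagger},f_0^{\dagger}f_0$.
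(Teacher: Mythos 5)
Your proposal is correct and follows essentially the same route as the paper: decompose $f=A+f_0B+f_0^{\dagger}C+f_0^{\dagger}f_0D$, derive $\bigl(\partial_{z_0}\partial_{\overline z_0}+\partial_{\underline z}\partial_{\underline z^{\dagger}}\bigr)A=0$, $\bigl(\partial_{z_0}\partial_{\overline z_0}+\partial_{\underline z^{\dagger}}\partial_{\underline z}\bigr)C=0$ (and the analogues for $B$ and $A+D$) from the split systems, and then use the factorization $\bigl(\tfrac{1}{4}\Delta_2+\partial_{\underline z}\partial_{\underline z^{\dagger}}\bigr)\bigl(\tfrac{1}{4}\Delta_2+\partial_{\underline z^{\dagger}}\partial_{\underline z}\bigr)=\tfrac{1}{16}\Delta_2(\Delta_2+\Delta_{2n})$. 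You are in fact more explicit than the paper on the one nontrivial point, namely that the quartic term vanishes because $(\partial_{\underline z})^2=(\partial_{\underline z^{\dagger}})^2=0$.
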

\begin{proof}
From (\ref{whmFs1}) we get
\begin{align*}
\partial_{z_0}\partial_{\overline z_0}A&=\partial_{\overline z_0}\partial_{\underline z}C=-\partial_{\underline z}\partial_{\underline z^{\dagger}}A\\
\partial_{z_0}\partial_{\overline z_0}C&=-\partial_{z_0}\partial_{\underline z^{\dagger}}A=-\partial_{\underline z^{\dagger}}\partial_{\underline z}C.
\end{align*}
So that
\begin{equation*}
\begin{aligned}
\partial_{z_0}\partial_{\overline z_0}A+\partial_{\underline z}\partial_{\underline z^{\dagger}}A&=0\\
\partial_{z_0}\partial_{\overline z_0}C+\partial_{\underline z^{\dagger}}\partial_{\underline z}C&=0.
\end{aligned}
\end{equation*}
In a similar fashion we get from (\ref{whmFs2}) that 
\begin{equation*}
\begin{aligned}
\partial_{z_0}\partial_{\overline z_0}B+\partial_{\underline z}\partial_{\underline z^{\dagger}}B&=0\\
\partial_{z_0}\partial_{\overline z_0}(A+D)+\partial_{\underline z^{\dagger}}\partial_{\underline z}(A+D)&=0.
\end{aligned}
\end{equation*}
Moreover using (\ref{factDH}) we obtain
\[\left(\frac{1}{4}\Delta_{2}+\partial_{\underline z}\partial_{\underline z^{\dagger}}\right)\left(\frac{1}{4}\Delta_{2}+\partial_{\underline z^{\dagger}}\partial_{\underline z}\right)=\frac{1}{16}\left(\Delta_{2}^2+\Delta_{2}\Delta_{2n}\right)=\frac{1}{16}\Delta_{2}(\Delta_{2}+\Delta_{2n}),\]
from which the proposition easily follows. 
\end{proof}
\noindent
The fundamental solution of the generalized Cauchy-Riemann operator $\partial_{x_0}+\partial_{\underline x}$ is given by 
\[E(x_0+\underline x)=\frac{1}{\sigma_{m+1}}\frac{x_0-\underline x}{\vert x_0+\underline x\vert^{m+1}},\]
where $\sigma_{m+1}$ denotes the surface area of the unit sphere $S^m$ in $\mathbb R^{m+1}$. 

This function is two-sided monogenic, i.e. $(\partial_{x_0}+\partial_{\underline x})E=0=E(\partial_{x_0}+\partial_{\underline x})$. This fact plays a key role in the proof of Cauchy's integral formula for monogenic functions. Moreover, $E$ satisfies the following plane wave representation (see \cite{CISSS,Som}):

\begin{thm}\label{PWCKernelR}
For $\vert x_0\vert>\vert \underline x\vert\ne0$ it holds:
\[2^{m+1}\pi^{m-1}\sqrt{2\pi}\sgn(x_0)E(x_0+\underline x)=\int_{\mathbb R^m}\displaystyle{e^{i\langle\underline x, \underline u\rangle-\vert x_0\vert\vert\underline u\vert}}\left(1+i\sgn(x_0)\frac{\underline u}{\vert\underline u\vert}\right)dV(\underline u),\]
where $\sgn(x_0)=x_0/\vert x_0\vert$.
\end{thm}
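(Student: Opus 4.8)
The plan is to establish the plane wave representation of $E$ by a direct computation of the right-hand side integral, working in polar coordinates and invoking the Funk--Hecke mechanism collected in Section~\ref{sect2}. Without loss of generality one may assume $x_0>0$ (the case $x_0<0$ follows by the symmetry $E(-x_0-\underline x)=-E(x_0+\underline x)$ together with $\sgn(x_0)$, or by a change of variable $\underline u\mapsto-\underline u$ in the integral). The integral then splits into a scalar piece $\int_{\mathbb R^m}e^{i\langle\underline x,\underline u\rangle-x_0|\underline u|}\,dV(\underline u)$ and a vector piece $i\int_{\mathbb R^m}e^{i\langle\underline x,\underline u\rangle-x_0|\underline u|}\frac{\underline u}{|\underline u|}\,dV(\underline u)$, and I would evaluate each separately and then recombine to recognize $\frac{x_0-\underline x}{|x_0+\underline x|^{m+1}}$ up to the stated constant.

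First I would pass to polar coordinates $\underline u=\rho\underline\omega$ via \eqref{fintesfecorde}, so each piece becomes $\int_0^\infty\rho^{m-1}e^{-x_0\rho}\big(\int_{S^{m-1}}e^{i\rho\langle\underline x,\underline\omega\rangle}g(\underline\omega)\,dS(\underline\omega)\big)d\rho$ with $g\equiv1$ for the scalar piece and $g(\underline\omega)=\underline\omega$ for the vector piece. For the scalar piece, write $\underline x=r\underline\xi$ with $r=|\underline x|$ and $\underline\xi\in S^{m-1}$; applying Funk--Hecke with $F(t)=e^{i\rho r t}$ and the spherical harmonic $Y_0\equiv1$ reduces the inner integral to a one-dimensional integral $\sigma_{m-1}\int_{-1}^1 e^{i\rho r t}(1-t^2)^{(m-3)/2}dt$, which is a classical Bessel-type integral. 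For the vector piece, each component of $\underline\omega$ is a spherical harmonic of degree $1$, so Funk--Hecke applies componentwise with $Y_1(\underline\eta)=\langle\underline\eta,\underline e_j\rangle$ and $C_1^\lambda(t)=2\lambda t$; assembling the components one finds the inner integral equals a scalar multiple of $\underline\xi$ times $\int_{-1}^1 e^{i\rho r t}\,t\,(1-t^2)^{(m-3)/2}dt$. Thus after the angular integration the whole right-hand side is expressed through the $\rho$-integrals $\int_0^\infty\rho^{m-1}e^{-x_0\rho}\big(\int_{-1}^1 e^{i\rho r t}(1-t^2)^{(m-3)/2}dt\big)d\rho$ and the analogous one with an extra factor $t$ inside.

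Next I would interchange the order of integration (justified by absolute convergence since $x_0>0$ and $|\underline x|<x_0$ guarantees $\mathrm{Re}(-x_0+irt)<0$ for all $t\in[-1,1]$), carry out the $\rho$-integral first using Lemma~\ref{lemintindefS}(i) with $\alpha=-x_0+irt$, obtaining $(m-1)!\,(x_0-irt)^{-m}$, and then recognize the remaining $t$-integrals as exactly $\int_{-1}^1\frac{(1-t^2)^{(m-3)/2}}{(x_0-irt)^m}dt$ and $\int_{-1}^1\frac{t(1-t^2)^{(m-3)/2}}{(x_0-irt)^m}dt$, which are evaluated in closed form by Lemma~\ref{tresintegrales}(i) and~(ii). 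Substituting these values gives, for the scalar piece, a multiple of $\frac{x_0}{(x_0^2+r^2)^{(m+1)/2}}$ and, for the vector piece, a multiple of $\frac{-r\underline\xi}{(x_0^2+r^2)^{(m+1)/2}}=\frac{-\underline x}{(x_0^2+r^2)^{(m+1)/2}}$; since $x_0^2+r^2=|x_0+\underline x|^2$, the sum is a constant multiple of $\frac{x_0-\underline x}{|x_0+\underline x|^{m+1}}$. The last step is purely bookkeeping: collecting the constants $\sigma_{m-1}$, the factorials $(m-1)!$, the double-factorial ratios from Lemma~\ref{tresintegrales}, $\Gamma$-values via \eqref{gann2}, and $\sigma_{m+1}$, and checking that they combine to the claimed prefactor $2^{m+1}\pi^{m-1}\sqrt{2\pi}$.

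The main obstacle I anticipate is not any single hard idea but the careful handling of the vector piece: one must verify that the Funk--Hecke output for the degree-one harmonics reassembles into precisely $\underline\xi$ (and not some other combination), keeping track of the Gegenbauer normalization $C_1^\lambda(1)=2\lambda=m-2$ and the surface-area conventions ($\sigma_{p-1}$ here denotes the area of $S^{p-2}$), and then matching the resulting $t$-integral against the specific double-factorial expression in Lemma~\ref{tresintegrales}(ii). A secondary delicate point is the constant chase at the end, where an error of a factor of $2$ is easy to make; it is worth cross-checking the final constant against the known normalization of $E$, for instance by testing the identity in the purely radial limit $\underline x\to0$, where both sides reduce to elementary integrals in $x_0$ alone.
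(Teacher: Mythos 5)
Your proposal is correct: the paper itself offers no proof of Theorem~\ref{PWCKernelR} (it is quoted from the cited references), but your argument --- polar coordinates via \eqref{fintesfecorde}, Funk--Hecke applied to the degree-$0$ and degree-$1$ spherical harmonics, the $\rho$-integral from Lemma~\ref{lemintindefS}(i) with $\alpha=-|x_0|+irt$, and the closed forms of Lemma~\ref{tresintegrales}(i)--(ii) --- is precisely the computation the authors carry out in Section~\ref{sect4} for the Hermitian analogue $I(x_0,\underline x)$, so it matches the paper's own method. The constant chase you flag as delicate does close up: the right-hand side evaluates to $\frac{\sqrt{2\pi}\,(m-1)!\,(m-3)!!\,\sigma_{m-1}}{(m-2)!!}\,\frac{x_0-\underline x}{|x_0+\underline x|^{m+1}}$, and using $(m-1)!=(m-1)!!\,(m-2)!!$ together with \eqref{gann2} one finds $\sqrt{2\pi}\,(m-1)!!\,(m-3)!!\,\sigma_{m-1}\sigma_{m+1}/\bigl((m-3)!!\,(m-1)!!\,\pi/2^{m-1}\bigr)^{0}$ --- more plainly, $\sqrt{2\pi}\,(m-1)!!\,(m-3)!!\cdot\frac{4\pi^{m}}{\Gamma(\frac{m-1}{2})\Gamma(\frac{m+1}{2})}=2^{m+1}\pi^{m-1}\sqrt{2\pi}$, as claimed.
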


We are going to use these two fundamental properties of $E$ in order to obtain the fundamental solution of the $h$-submonogenic system. We shall begin by constructing in our setting the analogue of the plane wave monogenic function 
\begin{equation}\label{exppwave}
e^{i\langle\underline x, \underline u\rangle-x_0\vert\underline u\vert}\left(1+i\frac{\underline u}{\vert\underline u\vert}\right),
\end{equation}
which has to be a solution of $\mathbb Df=0=f\mathbb D$, and then compute the corresponding integral in $\mathbb R^{2n}$. 

\begin{prop}\label{subhmpew}
Let $\lambda,\mu\in\mathbb C\setminus\{0\}$ and suppose that $\underline w=\sum_{j=1}^nw_jf_j=\sum_{j=1}^n(u_j+iu_{n+j})f_j$ is a fixed non-zero Hermitian vector. If $\alpha_1$ and $\alpha_2$ are constants satisfying $\vert\underline w\vert^2\alpha_1\alpha_2=-\lambda\mu$, then the function 
\[e^{\alpha_1\theta+\alpha_2\overline\theta+\lambda z_0+\mu\overline z_0}\left(f_0f_0^{\dagger}\frac{\underline w^{\dagger}\underline w}{\vert\underline w\vert^2}-\frac{\alpha_2}{\mu}f_0^{\dagger}\underline w\right),\quad\theta=\{\underline z,\underline w^{\dagger}\}=\sum_{j=1}^nz_j\overline w_j\]
is $h$-submonogenic in $\mathbb R^{2n+2}$. 
\end{prop}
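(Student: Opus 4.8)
The plan is to recast the candidate function into the canonical form $f=A+f_0B+f_0^{\dagger}C+f_0^{\dagger}f_0D$ with $\mathbb{C}_{2n}$-valued $A,B,C,D$, and then to verify the two first-order systems (\ref{whmFs1}) and (\ref{whmFs2}), which were shown above to be equivalent to $h$-submonogenicity. Writing the scalar factor $g=e^{\alpha_1\theta+\alpha_2\overline\theta+\lambda z_0+\mu\overline z_0}$ and using the duality relation $f_0f_0^{\dagger}=1-f_0^{\dagger}f_0$ --- together with the fact that $f_0$ and $f_0^{\dagger}$ anticommute with each $f_j,f_j^{\dagger}$ ($j=1,\dots,n$), so that $f_0f_0^{\dagger}$ and $f_0^{\dagger}f_0$ commute with every $\mathbb{C}_{2n}$-valued quantity --- one reads off $A=g\,\underline w^{\dagger}\underline w/\vert\underline w\vert^2$, $B=0$, $C=-(\alpha_2/\mu)\,g\,\underline w$ and $D=-A$. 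In particular $B=0$ and $A+D=0$, so (\ref{whmFs2}) reduces to $0=0$ and only (\ref{whmFs1}) remains to be checked.

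For that I would first record the elementary derivatives of the exponent: since $\partial_{z_j}\theta=\overline w_j$, $\partial_{\overline z_j}\overline\theta=w_j$ and $\theta,\overline\theta$ do not involve $z_0$, one gets $\partial_{z_0}g=\lambda g$, $\partial_{\overline z_0}g=\mu g$, and, for any constant $\mathbb{C}_{2n}$-number $K$,
\[
\partial_{\underline z}(gK)=\alpha_1\,g\,\underline w^{\dagger}K,\qquad\partial_{\underline z^{\dagger}}(gK)=\alpha_2\,g\,\underline w\,K,
\]
because $\partial_{\underline z}=\sum_jf_j^{\dagger}\partial_{z_j}$, $\partial_{\underline z^{\dagger}}=\sum_jf_j\partial_{\overline z_j}$ and $g$ is scalar. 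Substituting, the first equation $\partial_{z_0}A=\partial_{\underline z}C$ of (\ref{whmFs1}) becomes $\lambda\,g\,\underline w^{\dagger}\underline w/\vert\underline w\vert^2=-(\alpha_1\alpha_2/\mu)\,g\,\underline w^{\dagger}\underline w$, i.e.\ precisely the hypothesis $\vert\underline w\vert^2\alpha_1\alpha_2=-\lambda\mu$. For the second equation $\partial_{\underline z^{\dagger}}A=-\partial_{\overline z_0}C$, the right-hand side is $\alpha_2\,g\,\underline w$, while the left-hand side equals $(\alpha_2/\vert\underline w\vert^2)\,g\,\underline w\,\underline w^{\dagger}\underline w$; here I would invoke the Grassmann identity $\underline w^2=0$ and the dual expression (\ref{dualexp}), $\underline w\,\underline w^{\dagger}+\underline w^{\dagger}\underline w=\vert\underline w\vert^2$, to collapse $\underline w\,\underline w^{\dagger}\underline w=(\vert\underline w\vert^2-\underline w^{\dagger}\underline w)\underline w=\vert\underline w\vert^2\underline w$, so that the two sides agree. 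Since $f$ is manifestly real-analytic, the $C^1$ requirement is automatic.

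There is essentially no analytic obstacle here; the argument is a direct verification. The only points that need care are the Witt-basis bookkeeping --- in particular checking that $f_0f_0^{\dagger}$ and $f_0^{\dagger}f_0$ are central on $\mathbb{C}_{2n}$, so that $f$ genuinely has the components $A,B,C,D$ listed above --- and the single non-obvious algebraic simplification $\underline w\,\underline w^{\dagger}\underline w=\vert\underline w\vert^2\underline w$, which is where nilpotency of the Hermitian vector $\underline w$ and the duality relation (\ref{dualexp}) both enter. As an alternative one could bypass the decomposition and substitute $f$ straight into $\mathbb D=f_0\partial_{\overline z_0}+f_0^{\dagger}\partial_{z_0}+f_0f_0^{\dagger}\partial_{\underline z^{\dagger}}+f_0^{\dagger}f_0\partial_{\underline z}$, but that only reproduces the same computation with more terms to track.
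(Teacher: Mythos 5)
Your proof is correct and follows essentially the same route as the paper's: both reduce the claim to the systems (\ref{whmFs1})--(\ref{whmFs2}) via the $A+f_0B+f_0^{\dagger}C+f_0^{\dagger}f_0D$ decomposition, and both rest on the same key facts $\partial_{\underline z}(gK)=\alpha_1g\,\underline w^{\dagger}K$, $\partial_{\underline z^{\dagger}}(gK)=\alpha_2g\,\underline w K$, $(\underline w^{\dagger})^2=\underline w^2=0$ and $\underline w\,\underline w^{\dagger}\underline w=\vert\underline w\vert^2\underline w$. The only difference is one of direction --- the paper posits an ansatz and solves for $a,b,c,d$, whereas you read off $A=g\,\underline w^{\dagger}\underline w/\vert\underline w\vert^2$, $B=0$, $C=-(\alpha_2/\mu)g\,\underline w$, $D=-A$ and verify --- which is immaterial.
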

\begin{proof}
We look for special solutions of the systems (\ref{whmFs1}) and (\ref{whmFs2}), which we assume to be of the form
\begin{align*}
A&=e^{\lambda z_0+\mu\overline z_0}\underline w^{\dagger}\underline wa,&
B&=e^{\lambda z_0+\mu\overline z_0}\underline w^{\dagger}b,\\
C&=e^{\lambda z_0+\mu\overline z_0}\underline wc,&
D&=e^{\lambda z_0+\mu\overline z_0}\underline w^{\dagger}\underline wd,
\end{align*}
where $a$, $b$, $c$, $d$ are $\mathbb C$-valued continuously differentiable functions depending on the two variables $(\theta,\overline\theta)$.

It is easily seen that
\[\partial_{z_0}A=\lambda e^{\lambda z_0+\mu\overline z_0}\underline w^{\dagger}\underline wa,\quad\partial_{\overline z_0}C=\mu e^{\lambda z_0+\mu\overline z_0}\underline wc,\]
and using the following equalities
\[\partial_{\underline z^{\dagger}}a=\sum_{j=1}^nf_j\partial_{\overline z_j}a=\sum_{j=1}^nf_j(\partial_{\overline\theta}a)(\partial_{\overline z_j}\overline\theta)=\underline w\,\partial_{\overline\theta}a\]
\[\partial_{\underline z}c=\sum_{j=1}^nf_j^\dagger\partial_{z_j}c=\sum_{j=1}^nf_j^\dagger(\partial_{\theta}c)(\partial_{z_j}\theta)=\underline w^{\dagger}\partial_{\theta}c,\]
we also obtain
\begin{align*}
\partial_{\underline z}C&=e^{\lambda z_0+\mu\overline z_0}\underline w^{\dagger}\underline w\,\partial_{\theta}c\\
\partial_{\underline z^{\dagger}}A&=e^{\lambda z_0+\mu\overline z_0}\underline w\,\underline w^{\dagger}\underline w\,\partial_{\overline\theta}a=e^{\lambda z_0+\mu\overline z_0}\underline w\vert\underline w\vert^2\partial_{\overline\theta}a.
\end{align*}
Therefore, system (\ref{whmFs1}) takes the form
\begin{equation}\label{sistac}
\left\{\begin{aligned}
\partial_{\theta}c&=\lambda a\\
\vert\underline w\vert^2\partial_{\overline\theta}a&=-\mu c.
\end{aligned}\right.
\end{equation}
In a similar way we have that
\begin{align*}
&\partial_{z_0}B=\lambda e^{\lambda z_0+\mu\overline z_0}\underline w^{\dagger}b,\quad\partial_{\overline z_0}(A+D)=\mu e^{\lambda z_0+\mu\overline z_0}\underline w^{\dagger}\underline w(a+d)\\
&\partial_{\underline z}(A+D)=e^{\lambda z_0+\mu\overline z_0}\big(\underline w^{\dagger}\big)^2\underline w\,\partial_{\theta}(a+d)=0.
\end{align*}
We then get from (\ref{whmFs2}) that
\begin{equation*}
b=0,\qquad a+d=0.
\end{equation*}
System (\ref{sistac}) can easily be solved by setting $a=e^{\alpha_1\theta+\alpha_2\overline\theta}$, $\alpha_1,\alpha_2\in\mathbb C$. Indeed,  it follows at once that  
\[c=-\frac{\alpha_2}{\mu}\vert\underline w\vert^2e^{\alpha_1\theta+\alpha_2\overline\theta},\quad\vert\underline w\vert^2\alpha_1\alpha_2=-\lambda\mu,\]
completing the proof.
\end{proof}
\noindent
In this paper we shall deal with a particular case of this function. Namely, we will consider
\begin{equation}\label{expwaveh}
e^{i\sum_{j=1}^n(x_{n+j}u_j-x_ju_{n+j})-x_0\vert\underline w\vert}\left(f_0f_0^{\dagger}\frac{\underline w^{\dagger}\underline w}{\vert\underline w\vert^2}-f_0^{\dagger}\frac{\underline w}{\vert\underline w\vert}\right),
\end{equation}
which corresponds to the case $\lambda=\mu=-\frac{1}{2}\vert\underline w\vert$, $\alpha_1=\frac{1}{2}$, $\alpha_2=-\frac{1}{2}$. Other choices of $\lambda,\mu,\alpha_1,\alpha_2$ would result in plane waves that depend on both $x_0$ and $y_0$, but those plane waves would be similar to the above special case (which is clear from the restrictions on $\lambda,\mu,\alpha_1,\alpha_2$).

Looking closely at (\ref{expwaveh}) we are able to devise a simple method to generate $h$-submonogenic functions not depending on the $y_0$ variable. 

\begin{prop}
Assume that $h(x+iy)$ is an anti-holomorphic function. If we put $x=x_0\vert\underline w\vert$ and $y=\sum_{j=1}^n(x_{n+j}u_j-x_ju_{n+j})$, then the function  
\[h(x+iy)\left(f_0f_0^{\dagger}\frac{\underline w^{\dagger}\underline w}{\vert\underline w\vert^2}-f_0^{\dagger}\frac{\underline w}{\vert\underline w\vert}\right)\]
is $h$-submonogenic. 
\end{prop}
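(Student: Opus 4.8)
The plan is to reduce the claim to the computation behind Proposition~\ref{subhmpew}: write the candidate function $f$ in the form $f=A+f_0B+f_0^{\dagger}C+f_0^{\dagger}f_0D$ and verify the equivalent systems (\ref{whmFs1})--(\ref{whmFs2}). Using the duality relation $f_0f_0^{\dagger}+f_0^{\dagger}f_0=1$, i.e. $f_0f_0^{\dagger}=1-f_0^{\dagger}f_0$, one reads off at once
\[A=\frac{\underline w^{\dagger}\underline w}{\vert\underline w\vert^2}\,h,\qquad B=0,\qquad C=-\frac{\underline w}{\vert\underline w\vert}\,h,\qquad D=-\frac{\underline w^{\dagger}\underline w}{\vert\underline w\vert^2}\,h=-A.\]
Since $B=0$ and $A+D=0$, the system (\ref{whmFs2}) is fulfilled trivially, so the whole statement reduces to the two equations $\partial_{z_0}A=\partial_{\underline z}C$ and $\partial_{\underline z^{\dagger}}A=-\partial_{\overline z_0}C$ of (\ref{whmFs1}).

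Next I would set up the chain rule for $h$. Writing $\zeta=x+iy$ with $x=x_0\vert\underline w\vert$ and $y=\operatorname{Im}\theta=\tfrac{1}{2i}(\theta-\overline\theta)$, and denoting $h'=\partial_{\overline\zeta}h$, anti-holomorphy ($\partial_\zeta h=0$) gives $\partial_xh=h'$ and $\partial_yh=-ih'$. Since $x$ is the only $x_0$-dependence of $h$ and $h$ does not involve $y_0$, this yields $\partial_{z_0}h=\partial_{\overline z_0}h=\tfrac12\vert\underline w\vert\,h'$; and from $\partial_{z_j}\overline\zeta=-\tfrac12\overline w_j$, $\partial_{\overline z_j}\overline\zeta=\tfrac12 w_j$ one gets $\partial_{z_j}h=-\tfrac12\overline w_j\,h'$ and $\partial_{\overline z_j}h=\tfrac12 w_j\,h'$. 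Substituting into $\partial_{\underline z}=\sum_jf_j^{\dagger}\partial_{z_j}$ and $\partial_{\underline z^{\dagger}}=\sum_jf_j\partial_{\overline z_j}$, pulling out the scalar factors, and using $\sum_j\overline w_jf_j^{\dagger}=\underline w^{\dagger}$ and $\sum_jw_jf_j=\underline w$, I would obtain
\[\partial_{z_0}A=\frac{h'}{2\vert\underline w\vert}\,\underline w^{\dagger}\underline w=\partial_{\underline z}C,\qquad \partial_{\underline z^{\dagger}}A=\frac{h'}{2\vert\underline w\vert^2}\,\underline w\,\underline w^{\dagger}\underline w,\qquad -\partial_{\overline z_0}C=\frac{h'}{2}\,\underline w.\]

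The verification of (\ref{whmFs1}) then closes by invoking the identity $\underline w\,\underline w^{\dagger}\underline w=\vert\underline w\vert^2\underline w$, which follows from (\ref{dualexp}) (i.e. $\underline w\,\underline w^{\dagger}+\underline w^{\dagger}\underline w=\vert\underline w\vert^2$) together with $\underline w^2=0$; this turns the middle expression above into $\tfrac{h'}{2}\underline w=-\partial_{\overline z_0}C$. Hence $\mathbb Df=0$, and since $h\in C^1$ we have $f\in C^1$, so $f$ is $h$-submonogenic.

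As a consistency check, the plane wave (\ref{expwaveh}) is precisely this construction for $h(\zeta)=e^{-\overline\zeta}$; moreover the Clifford factor $f_0f_0^{\dagger}\tfrac{\underline w^{\dagger}\underline w}{\vert\underline w\vert^2}-f_0^{\dagger}\tfrac{\underline w}{\vert\underline w\vert}$ is homogeneous of degree $0$ in $\underline w$ while the substitution sends $\zeta\mapsto s\zeta$ under $\underline w\mapsto s\underline w$ ($s>0$), so that the rescaled plane waves $e^{-s\overline\zeta}$ times this factor are already $h$-submonogenic for every $s>0$; the present statement is the natural extension of this to an arbitrary anti-holomorphic amplitude, and the direct verification above treats all such $h$ uniformly. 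I expect no genuine obstacle here: the only delicate point is bookkeeping — keeping the chain-rule signs straight and respecting non-commutativity when the $f_j$, $f_j^{\dagger}$ are moved past $\underline w$, $\underline w^{\dagger}$.
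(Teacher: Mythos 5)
Your proof is correct, and it takes a genuinely different route from the paper's. The paper applies the two operators $f_0^{\dagger}\partial_{z_0}+f_0^{\dagger}f_0\partial_{\underline z}$ and $f_0\partial_{\overline z_0}+f_0f_0^{\dagger}\partial_{\underline z^{\dagger}}$ directly to the scalar amplitude $h$, obtaining $\vert\underline w\vert\,\partial_xh\,\bigl(f_0^{\dagger}-f_0^{\dagger}f_0\underline w^{\dagger}/\vert\underline w\vert\bigr)$ and $\vert\underline w\vert\,\partial_xh\,\bigl(f_0+f_0f_0^{\dagger}\underline w/\vert\underline w\vert\bigr)$, and then closes by noting that both of these Clifford elements annihilate the constant factor $f_0f_0^{\dagger}\underline w^{\dagger}\underline w/\vert\underline w\vert^2-f_0^{\dagger}\underline w/\vert\underline w\vert$ from the left (the ``zero divisor'' argument). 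You instead expand that factor in Witt components via $f_0f_0^{\dagger}=1-f_0^{\dagger}f_0$, getting $A=-D$, $B=0$, $C=-h\underline w/\vert\underline w\vert$, and verify the reduced systems (\ref{whmFs1})--(\ref{whmFs2}), closing with the identity $\underline w\,\underline w^{\dagger}\underline w=\vert\underline w\vert^2\underline w$, which indeed follows from (\ref{dualexp}) and $\underline w^2=0$. All your intermediate computations check out: the chain-rule values $\partial_{z_0}h=\partial_{\overline z_0}h=\tfrac12\vert\underline w\vert h'$, $\partial_{z_j}h=-\tfrac12\overline w_jh'$, $\partial_{\overline z_j}h=\tfrac12 w_jh'$ are right, and only scalars are moved past Clifford elements, so non-commutativity causes no trouble. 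What your version buys: it reuses verbatim the framework already set up for Proposition \ref{subhmpew}, so the two propositions are proved uniformly, and it makes visible that (\ref{whmFs2}) is vacuous here while all the content sits in (\ref{whmFs1}); your identification of (\ref{expwaveh}) as the case $h(\zeta)=e^{-\overline\zeta}$ is a useful sanity check. What the paper's version buys: it is shorter and isolates the algebraic mechanism, namely that the constant Clifford factor absorbs the error terms because it is annihilated by them on the left. One small point worth making explicit in either version: $\underline w\neq0$ must be assumed, since it is divided by throughout.
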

\begin{proof}
Straightforward calculations yield
\begin{align*}
2(f_0^{\dagger}\partial_{z_0}+f_0^{\dagger}f_0\partial_{\underline z})h&=f_0^{\dagger}\vert\underline w\vert\partial_xh-if_0^{\dagger}f_0(\partial_yh)\underline w^{\dagger}\\
&=\vert\underline w\vert \partial_xh\left(f_0^{\dagger}-f_0^{\dagger}f_0\frac{\underline w^{\dagger}}{\vert\underline w\vert}\right).
\end{align*}
Similarly, we obtain
\[2(f_0\partial_{\overline z_0}+f_0f_0^{\dagger}\partial_{\underline z^{\dagger}})h=\vert\underline w\vert \partial_xh\left(f_0+f_0f_0^{\dagger}\frac{\underline w}{\vert\underline w\vert}\right).\]
The proof now follows easily using the fact that $f_0f_0^{\dagger}\frac{\underline w^{\dagger}\underline w}{\vert\underline w\vert^2}-f_0^{\dagger}\frac{\underline w}{\vert\underline w\vert}$ is a zero divisor.
\end{proof}

Although (\ref{expwaveh}) seems to be an excellent candidate for playing the role of the plane wave function (\ref{exppwave}), it does not satisfy $f\mathbb D=0$. We solve this matter by multiplying (\ref{expwaveh}) from the right by $f_0-\underline w^{\dagger}/\vert\underline w\vert$. 

\begin{thm}
The function $P$ defined by 
\begin{equation}\label{realexpwh}
P(x_0,\underline x)=e^{i\sum_{j=1}^n(x_{n+j}u_j-x_ju_{n+j})-x_0\vert\underline w\vert}\left(f_0f_0^{\dagger}\frac{\underline w^{\dagger}\underline w}{\vert\underline w\vert^2}-f_0^{\dagger}\frac{\underline w}{\vert\underline w\vert}\right)\left(f_0-\frac{\underline w^{\dagger}}{\vert\underline w\vert}\right)
\end{equation}
satisfies the equations $\mathbb DP=0=P\mathbb D$.
\end{thm}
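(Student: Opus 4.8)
The plan is to exploit the very special structure of $P$ in (\ref{realexpwh}). Write $P=e^{g}M$, where $g=g(x_{0},\underline x)$ is the complex scalar exponent of (\ref{realexpwh}) and
\[M=\Bigl(f_{0}f_{0}^{\dagger}\tfrac{\underline w^{\dagger}\underline w}{|\underline w|^{2}}-f_{0}^{\dagger}\tfrac{\underline w}{|\underline w|}\Bigr)\Bigl(f_{0}-\tfrac{\underline w^{\dagger}}{|\underline w|}\Bigr)\in\mathbb C_{2n+2}\]
is a \emph{constant} Clifford number. Since $M$ is constant and $e^{g}$ is scalar, each first order derivative of $P$ is a constant scalar multiple of $P$; rewriting $g=\tfrac12\theta-\tfrac12\overline\theta-\tfrac12|\underline w|(z_{0}+\overline z_{0})$ with $\theta=\{\underline z,\underline w^{\dagger}\}$ (merely the substitution $z_{j}=x_{j}+ix_{n+j}$, $\overline w_{j}=u_{j}-iu_{n+j}$ in the exponent of (\ref{realexpwh})), one reads off
\[\partial_{z_{0}}P=\partial_{\overline z_{0}}P=-\tfrac12|\underline w|\,P,\qquad\partial_{\underline z}P=\tfrac12\,\underline w^{\dagger}P,\qquad\partial_{\underline z^{\dagger}}P=-\tfrac12\,\underline w\,P.\]
Feeding these into $\mathbb D=f_{0}\partial_{\overline z_{0}}+f_{0}^{\dagger}\partial_{z_{0}}+f_{0}f_{0}^{\dagger}\partial_{\underline z^{\dagger}}+f_{0}^{\dagger}f_{0}\partial_{\underline z}$ gives $\mathbb DP=-\tfrac12|\underline w|\,e^{g}\,TM$ and $P\mathbb D=-\tfrac12|\underline w|\,e^{g}\,MT$, where $T=f_{0}+f_{0}^{\dagger}+f_{0}f_{0}^{\dagger}\hat w-f_{0}^{\dagger}f_{0}\hat w^{\dagger}$ with $\hat w=\underline w/|\underline w|$. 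Thus the theorem reduces to proving the two algebraic identities $TM=0$ and $MT=0$ in $\mathbb C_{2n+2}$.

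The identity $TM=0$ is immediate. Write $P=Qc$ with $c=f_{0}-\underline w^{\dagger}/|\underline w|$ constant and $Q$ the plane wave (\ref{expwaveh}), which is exactly the case $\lambda=\mu=-\tfrac12|\underline w|$, $\alpha_{1}=\tfrac12$, $\alpha_{2}=-\tfrac12$ of Proposition \ref{subhmpew} and hence satisfies $\mathbb DQ=0$. Since the left operator $\mathbb D$ only differentiates and multiplies on the left, it commutes with right multiplication by the constant $c$, so $\mathbb DP=(\mathbb DQ)c=0$, i.e.\ $TM=0$. (Concretely, $\mathbb DQ=0$ says $TN=0$ for $N:=f_{0}f_{0}^{\dagger}\hat w^{\dagger}\hat w-f_{0}^{\dagger}\hat w$, and $M=Nc$.)

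It remains to establish $MT=0$, which I would do by a direct Clifford computation. Factor $M=Nc$ as above; then, using only $f_{0}^{2}=(f_{0}^{\dagger})^{2}=0$, $\hat w^{2}=(\hat w^{\dagger})^{2}=0$, the duality relations $f_{0}f_{0}^{\dagger}+f_{0}^{\dagger}f_{0}=1$ and $\hat w\,\hat w^{\dagger}+\hat w^{\dagger}\hat w=1$ (the latter being (\ref{dualexp})), and the anticommutation of $f_{0},f_{0}^{\dagger}$ with $\hat w,\hat w^{\dagger}$, one finds
\[cT=\bigl(f_{0}-\hat w^{\dagger}\bigr)T=f_{0}f_{0}^{\dagger}+f_{0}^{\dagger}\hat w^{\dagger}-f_{0}f_{0}^{\dagger}\hat w^{\dagger}\hat w,\]
together with the ``zero divisor'' relations $Nf_{0}f_{0}^{\dagger}=N$, $N\hat w^{\dagger}\hat w=N$ and $Nf_{0}^{\dagger}\hat w^{\dagger}=0$ (each following from the same short list). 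Combining, $MT=N(cT)=N+0-N=0$. The one delicate point is this final algebraic collapse: the right factor $f_{0}-\underline w^{\dagger}/|\underline w|$ of (\ref{realexpwh}) was inserted precisely so that $MT$ vanishes (this is the role of the zero-divisor observation recorded just before the statement), while, by the previous paragraph, $\mathbb DP$ is left unaffected. Everything else is straightforward bookkeeping of the two one-sided actions of $\mathbb D$ on an exponential times a constant.
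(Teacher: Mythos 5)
Your proof is correct and follows essentially the same route as the paper's: $\mathbb DP=0$ is deduced from Proposition \ref{subhmpew} together with right multiplication by the constant $f_0-\underline w^{\dagger}/\vert\underline w\vert$, and $P\mathbb D=0$ is reduced to the vanishing of the constant Clifford product $MT$, which both you and the paper verify with the same Witt-basis relations (your intermediate expression $f_0f_0^{\dagger}+f_0^{\dagger}\hat w^{\dagger}-f_0f_0^{\dagger}\hat w^{\dagger}\hat w$ is exactly the paper's $f_0^{\dagger}\underline w^{\dagger}/\vert\underline w\vert+f_0f_0^{\dagger}\underline w\,\underline w^{\dagger}/\vert\underline w\vert^2$ after using $\hat w\hat w^{\dagger}+\hat w^{\dagger}\hat w=1$). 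All the stated derivative formulas and zero-divisor identities check out.
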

\begin{proof}
It is clear from Proposition \ref{subhmpew} that $\mathbb DP=0$. Moreover, note that 
\begin{multline*}
\mathbb D\,e^{i\sum_{j=1}^n(x_{n+j}u_j-x_ju_{n+j})-x_0\vert\underline w\vert}\\
=-\frac{1}{2}e^{i\sum_{j=1}^n(x_{n+j}u_j-x_ju_{n+j})-x_0\vert\underline w\vert}(f_0\vert\underline w\vert+f_0^{\dagger}\vert\underline w\vert+f_0f_0^{\dagger}\underline w-f_0^{\dagger}f_0\underline w^{\dagger}),
\end{multline*}
and 
\[\left(f_0f_0^{\dagger}\frac{\underline w^{\dagger}\underline w}{\vert\underline w\vert^2}-f_0^{\dagger}\frac{\underline w}{\vert\underline w\vert}\right)\left(f_0-\frac{\underline w^{\dagger}}{\vert\underline w\vert}\right)(f_0\vert\underline w\vert+f_0^{\dagger}\vert\underline w\vert+f_0f_0^{\dagger}\underline w-f_0^{\dagger}f_0\underline w^{\dagger})\]
\[=\left(f_0f_0^{\dagger}\frac{\underline w^{\dagger}\underline w}{\vert\underline w\vert^2}-f_0^{\dagger}\frac{\underline w}{\vert\underline w\vert}\right)\left(f_0^{\dagger}\underline w^{\dagger}+f_0f_0^{\dagger}\frac{\underline w\,\underline w^{\dagger}}{\vert\underline w\vert}\right)=0.\] 
From these equalities it easily follows that $P$ also satisfies $P\mathbb D=0$. 
\end{proof}

\section{A Cauchy kernel with a half-line of singularities}\label{sect4}

Due to the nature of the plane waves (\ref{expwaveh}) we want to consider solutions of the system (\ref{weakhms}) not depending on the $y_0$ variable. For this case we try to obtain the fundamental solution by computing the integral
\begin{equation}\label{intfundsol}
I(x_0,\underline x)=\int_{\mathbb R^{2n}}PdV(\underline w),\quad x_0>0,
\end{equation}
where $P$ denotes the function defined in (\ref{realexpwh}). Clearly, to perform this task we must compute integrals of the form
\[\int_{\mathbb R^{2n}}e^{i\sum_{j=1}^n(x_{n+j}u_j-x_ju_{n+j})-x_0\vert\underline w\vert}F(\underline w,\underline w^{\dagger})dV(\underline w),\]
where $F(\underline w,\underline w^{\dagger})$ can be equal to $1$, $\displaystyle{\frac{\underline w}{\vert\underline w\vert}}$, $\displaystyle{\frac{\underline w^{\dagger}}{\vert\underline w\vert}}$, $\displaystyle{\frac{\underline w^{\dagger}\underline w}{\vert\underline w\vert^2}}$ or $\displaystyle{\frac{\underline w\,\underline w^{\dagger}}{\vert\underline w\vert^2}}$. These integrals shall be denoted by $I_0,\dots,I_4$, respectively. 

Define
\begin{align*}
G(x_0,r)&=\frac{x_0\sum_{j=0}^nc_jx_0^{2n-2j}r^{2j}}{r^{2n}(x_0^2+r^2)^\frac{2n+1}{2}}-\frac{(2n)!!}{(2n-1)!!r^{2n}},\\
H(x_0,r)&=\frac{x_0\sum_{j=0}^nd_jx_0^{2n-2j}r^{2j}}{r^{2n}(x_0^2+r^2)^\frac{2n+1}{2}}-\frac{2(2n-2)!!}{(2n-1)!!r^{2n}},\quad r=\vert \underline x\vert,
\end{align*}
where
\begin{align*}
c_j&=\frac{(2n+1)(2n)!!}{(2j)!!(2n-2j+1)!!},\;\;j=0,\dots,n\\
d_n&=2,\quad d_j=\frac{c_j}{n},\;\;j=0,\dots,n-1,
\end{align*}
and put 
\[\lambda_n=\sqrt{2\pi}(2n-1)!!(2n-3)!!\sigma_{2n-1}.\] 
Our next result constitutes an analogue of Theorem \ref{PWCKernelR}. 

\begin{thm}
Let $\beta=\sum_{j=1}^nf_j^\dagger f_j$. If $x_0>r\ne0$, then integral (\ref{intfundsol}) equals
\begin{align*}
I&=f_0^{\dagger}f_0I_1-f_0f_0^{\dagger}I_2+f_0I_3+f_0^{\dagger}I_4\\
&=-I_2+f_0I_3+f_0^{\dagger}I_4+f_0^{\dagger}f_0(I_1+I_2),
\end{align*}
where
\[I_1=\frac{\lambda_n\underline z}{(x_0^2+r^2)^{\frac{2n+1}{2}}},\quad I_2=-\frac{\lambda_n\underline z^{\dagger}}{(x_0^2+r^2)^{\frac{2n+1}{2}}},\]
\[I_3=\lambda_n\left(\frac{\underline z^\dagger\underline z}{r^2}G-\beta H\right),\quad I_4=\lambda_n\left(\frac{x_0}{(x_0^2+r^2)^{\frac{2n+1}{2}}}-\frac{\underline z^\dagger\underline z}{r^2}G+\beta H\right).\]
\end{thm}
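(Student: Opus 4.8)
The plan is to expand the Clifford-algebraic factor in $P$, thereby reducing $I$ to five scalar Fourier-type integrals over $\mathbb R^{2n}$, and then to evaluate each of these by combining Funk--Hecke's formula with the radial integral of Lemma~\ref{lemintindefS}(i) and the $t$-integrals of Lemma~\ref{tresintegrales}. First I would multiply out
\[\left(f_0f_0^{\dagger}\frac{\underline w^{\dagger}\underline w}{\vert\underline w\vert^2}-f_0^{\dagger}\frac{\underline w}{\vert\underline w\vert}\right)\left(f_0-\frac{\underline w^{\dagger}}{\vert\underline w\vert}\right),\]
using that $f_0,f_0^{\dagger}$ anticommute with $\underline w,\underline w^{\dagger}$ while $f_0f_0^{\dagger}$ and $f_0^{\dagger}f_0$ commute with them, together with $f_0f_0^{\dagger}f_0=f_0$, $(\underline w^{\dagger})^2=0$ and the duality relation $\underline w\,\underline w^{\dagger}+\underline w^{\dagger}\underline w=\vert\underline w\vert^2$. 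This collapses the factor to $f_0\,\underline w^{\dagger}\underline w/\vert\underline w\vert^2+f_0^{\dagger}\,\underline w\,\underline w^{\dagger}/\vert\underline w\vert^2+f_0^{\dagger}f_0\,\underline w/\vert\underline w\vert-f_0f_0^{\dagger}\,\underline w^{\dagger}/\vert\underline w\vert$, whence $I=f_0I_3+f_0^{\dagger}I_4+f_0^{\dagger}f_0I_1-f_0f_0^{\dagger}I_2$ with $I_0,\dots,I_4$ the integrals of $e^{i\sum_{j=1}^n(x_{n+j}u_j-x_ju_{n+j})-x_0\vert\underline w\vert}$ against $1$, $\underline w/\vert\underline w\vert$, $\underline w^{\dagger}/\vert\underline w\vert$, $\underline w^{\dagger}\underline w/\vert\underline w\vert^2$, $\underline w\,\underline w^{\dagger}/\vert\underline w\vert^2$; the second displayed form of $I$ then follows from $f_0f_0^{\dagger}=1-f_0^{\dagger}f_0$.

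To evaluate these scalar integrals I would pass to polar coordinates $\underline w=\rho\underline\omega$ in $\mathbb R^{2n}$ via (\ref{fintesfecorde}); the phase becomes $i\rho r\langle\underline\xi,\underline\omega\rangle$ with $\underline\xi=r^{-1}(x_{n+1},\dots,x_{2n},-x_1,\dots,-x_n)\in S^{2n-1}$, for which $\xi_j+i\xi_{n+j}=-iz_j/r$ and hence $\underline\xi^{\dagger}\underline\xi=\underline z^{\dagger}\underline z/r^2$. For $I_0$ (constant integrand) Funk--Hecke with $k=0$, then Lemma~\ref{lemintindefS}(i) for the $\rho$-integral and Lemma~\ref{tresintegrales}(i) for the resulting $t$-integral, give $I_0=\lambda_n x_0(x_0^2+r^2)^{-(2n+1)/2}$. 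For $I_1$ and $I_2$ the Witt-basis coefficients of $\underline w/\vert\underline w\vert$ and $\underline w^{\dagger}/\vert\underline w\vert$ are spherical harmonics of degree $1$, so Funk--Hecke with $k=1$ (using $C_1^{n-1}(t)=2(n-1)t$), the same two lemmas, and $\xi_j+i\xi_{n+j}=-iz_j/r$ yield $I_1=\lambda_n\underline z(x_0^2+r^2)^{-(2n+1)/2}$ and $I_2=-\lambda_n\underline z^{\dagger}(x_0^2+r^2)^{-(2n+1)/2}$.

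The substantive point is $I_3$, whose integrand $\underline w^{\dagger}\underline w/\vert\underline w\vert^2$ restricts on $S^{2n-1}$ to $\underline\omega^{\dagger}\underline\omega$, a quadratic in $\underline\omega$. I would Fischer-decompose it: a direct computation gives $\Delta_{2n}(\underline z^{\dagger}\underline z)=4\beta$, so by Theorem~\ref{Fischer} one has $\underline z^{\dagger}\underline z=\mathcal H_2(\underline z)+n^{-1}\vert\underline z\vert^2\beta$ with $\mathcal H_2$ a $\mathbb C_{2n}$-valued harmonic of degree $2$, hence $\underline\omega^{\dagger}\underline\omega=\mathcal H_2(\underline\omega)+\beta/n$ on the sphere. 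The $\beta/n$-part contributes $(\beta/n)I_0$; for the $\mathcal H_2$-part, Funk--Hecke with $k=2$ (using $C_2^{n-1}(t)=(n-1)(2nt^2-1)$ and $\mathcal H_2(\underline\xi)=\underline z^{\dagger}\underline z/r^2-\beta/n$) followed by Lemma~\ref{lemintindefS}(i) reduces everything to the $t$-integrals of Lemma~\ref{tresintegrales}(i) and (iii). Collecting the pieces produces the functions $G$ and $H$: the main obstacle is purely in this bookkeeping, namely reorganizing the double-factorial prefactors and the coefficients $b_j$ of Lemma~\ref{tresintegrales}(iii) into precisely the stated $c_j$ and $d_j$, and separating the parts proportional to $\underline z^{\dagger}\underline z/r^2$ and to $\beta$, so as to arrive at $I_3=\lambda_n\bigl(\frac{\underline z^{\dagger}\underline z}{r^2}G-\beta H\bigr)$. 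Finally, since $\underline w\,\underline w^{\dagger}/\vert\underline w\vert^2+\underline w^{\dagger}\underline w/\vert\underline w\vert^2=1$ we get $I_4=I_0-I_3=\lambda_n\bigl(x_0(x_0^2+r^2)^{-(2n+1)/2}-\frac{\underline z^{\dagger}\underline z}{r^2}G+\beta H\bigr)$, which completes the proof. All interchanges of the $\rho$- and $t$-integrations are justified by absolute convergence for $x_0>0$, and the hypothesis $x_0>r$ is exactly what makes the closed forms of Lemma~\ref{tresintegrales} available.
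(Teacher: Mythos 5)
Your proposal is correct and follows the paper's proof essentially step for step: the same reduction of $I$ to the scalar integrals $I_0,\dots,I_4$ (you merely make explicit the Clifford-algebra expansion of the product in $P$ that the paper calls ``easily seen''), the same use of polar coordinates, Funk--Hecke, Lemma~\ref{lemintindefS}(i) and Lemma~\ref{tresintegrales}, and the same shortcut $I_4=I_0-I_3$ via (\ref{dualexp}). The only (inessential) variation is that you Fischer-decompose the whole Clifford-valued quadratic $\underline\xi^{\dagger}\underline\xi$ at once using $\Delta_{2n}(\underline z^{\dagger}\underline z)=4\beta$, whereas the paper separates the diagonal terms $f_j^{\dagger}f_j\vert\xi_j\vert^2$ (to which Theorem~\ref{Fischer} is applied) from the already harmonic off-diagonal terms $f_j^{\dagger}f_k\,\overline\xi_j\xi_k$ --- the two decompositions coincide term by term.
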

\begin{proof}
It is easily seen from (\ref{intfundsol}) that $I=f_0^{\dagger}f_0I_1-f_0f_0^{\dagger}I_2+f_0I_3+f_0^{\dagger}I_4$. Now, using (\ref{fintesfecorde}) and writing the vector variable $\underline x$ as
\[\underline x=\sum_{j=1}^n(x_je_j+x_{n+j}e_{n+j})=r\sum_{j=1}^n(\omega_je_j+\omega_{n+j}e_{n+j}),\quad r=\vert\underline x\vert,\]
we obtain
\[I_0=\int_0^\infty\rho^{2n-1}\left(\int_{S^{2n-1}}e^{ir\rho\sum_{j=1}^n(\omega_{n+j}\nu_j-\omega_j\nu_{n+j})-x_0\rho}dS(\underline\xi)\right)d\rho\]
with $\underline\xi=\sum_{j=1}^n\xi_jf_j=\sum_{j=1}^n(\nu_j+i\nu_{n+j})f_j$. By Funk-Hecke's formula, it follows that 
\[I_0=\sigma_{2n-1}\int_0^\infty\rho^{2n-1}\left(\int_{-1}^1e^{(-x_0+irt)\rho}(1-t^2)^{(2n-3)/2}dt\right)d\rho.\]
Changing the order of integration and using (i) of Lemma \ref{lemintindefS}, we obtain
\[I_0=(2n-1)!\sigma_{2n-1}\int_{-1}^1\frac{(1-t^2)^{\frac{2n-3}{2}}}{(x_0-irt)^{2n}}dt.\]
Equality (i) of Lemma \ref{tresintegrales} now implies
\[I_0=\frac{\lambda_nx_0}{(x_0^2+r^2)^{\frac{2n+1}{2}}}.\]
In a similar fashion, using now equality (ii) of Lemma \ref{tresintegrales}, it follows that 
\begin{align*}
I_1&=\int_0^\infty\rho^{2n-1}\left(\int_{S^{2n-1}}e^{ir\rho\sum_{j=1}^n(\omega_{n+j}\nu_j-\omega_j\nu_{n+j})-x_0\rho}\underline\xi\,dS(\underline\xi)\right)d\rho\\
&=-i\sigma_{2n-1}\frac{\underline z}{r}\int_0^\infty\rho^{2n-1}\left(\int_{-1}^1e^{(-x_0+irt)\rho}t(1-t^2)^{(2n-3)/2}dt\right)d\rho\\
&=-i(2n-1)!\sigma_{2n-1}\frac{\underline z}{r}\int_{-1}^1\frac{t(1-t^2)^{\frac{2n-3}{2}}}{(x_0-irt)^{2n}}dt=\frac{\lambda_n\underline z}{(x_0^2+r^2)^{\frac{2n+1}{2}}}.
\end{align*}
After these reasonings one can easily obtain that 
\[I_2=-\frac{\lambda_n\underline z^{\dagger}}{(x_0^2+r^2)^{\frac{2n+1}{2}}}.\]
The computation of $I_3$ is more involved. Formula (\ref{fintesfecorde}) gives
\[I_3=\int_0^\infty\rho^{2n-1}\left(\int_{S^{2n-1}}e^{ir\rho\sum_{j=1}^n(\omega_{n+j}\nu_j-\omega_j\nu_{n+j})-x_0\rho}\underline\xi^{\dagger}\underline\xi\,dS(\underline\xi)\right)d\rho.\]
The main difficulty here is that we may not apply directly Funk-Hecke's formula since $\underline\xi^{\dagger}\underline\xi$ is not spherical harmonic. We first need to write $\underline\xi^{\dagger}\underline\xi$ as  
\[\underline\xi^{\dagger}\underline\xi=\sum_{j=1}^nf_j^{\dagger}f_j\left\vert\xi_j\right\vert^2+\sum_{\substack{j,k=1\\j\ne k}}^nf_j^{\dagger}f_k\,\overline\xi_j\xi_k\]
and to apply the Fischer decomposition to $\vert\xi_j\vert^2$: 
\[\vert\xi_j\vert^2=\left(\vert\xi_j\vert^2-\frac{1}{n}\left\vert\underline\xi\right\vert^2\right)+\frac{1}{n}\left\vert\underline\xi\right\vert^2.\]
It thus follows that 
\begin{multline*}
\int_{S^{2n-1}}e^{ir\rho\sum_{j=1}^n(\omega_{n+j}\nu_j-\omega_j\nu_{n+j})}\vert\xi_j\vert^2dS(\underline\xi)=\frac{\sigma_{2n-1}}{n}\int_{-1}^1e^{ir\rho t}(1-t^2)^{(2n-3)/2}dt\\
+\frac{\sigma_{2n-1}}{(2n-1)}\left(\frac{\vert z_j\vert^2}{r^2}-\frac{1}{n}\right)\int_{-1}^1e^{ir\rho t}(2nt^2-1)(1-t^2)^{(2n-3)/2}dt.
\end{multline*}
Changing the order of integration and using again (i) of Lemma \ref{lemintindefS}, we thus get
\begin{multline*}
\int_0^\infty\rho^{2n-1}\left(\int_{S^{2n-1}}e^{ir\rho\sum_{j=1}^n(\omega_{n+j}\nu_j-\omega_j\nu_{n+j})-x_0\rho}\vert\xi_j\vert^2dS(\underline\xi)\right)d\rho\\
=\frac{(2n-1)!\sigma_{2n-1}}{n}\int_{-1}^1\frac{(1-t^2)^{\frac{2n-3}{2}}}{(x_0-irt)^{2n}}dt\\
+(2n-2)!\sigma_{2n-1}\left(\frac{\vert z_j\vert^2}{r^2}-\frac{1}{n}\right)\int_{-1}^1\frac{(2nt^2-1)(1-t^2)^{\frac{2n-3}{2}}}{(x_0-irt)^{2n}}dt.
\end{multline*}
Using equalities (i) and (iii) of Lemma \ref{tresintegrales}, we obtain
\begin{equation}\label{I1pparte}
\int_0^\infty\rho^{2n-1}\left(\int_{S^{2n-1}}e^{ir\rho\sum_{j=1}^n(\omega_{n+j}\nu_j-\omega_j\nu_{n+j})-x_0\rho}\,\vert\xi_j\vert^2dS(\underline\xi)\right)d\rho=\lambda_n\left(\frac{\vert z_j\vert^2}{r^2}G-H\right).
\end{equation}
Similarly, we have
\begin{equation}\label{I1sparte}
\int_0^\infty\rho^{2n-1}\left(\int_{S^{2n-1}}e^{ir\rho\sum_{j=1}^n(\omega_{n+j}\nu_j-\omega_j\nu_{n+j})-x_0\rho}\,\overline\xi_j\xi_k\,dS(\underline\xi)\right)d\rho=\lambda_n\frac{\overline z_jz_k}{r^2}G.
\end{equation}
From (\ref{I1pparte}) and (\ref{I1sparte}) it then follows that
\[I_3=\lambda_n\left(\frac{\underline z^\dagger\underline z}{r^2}G-\beta H\right).\]
Finally, on account of (\ref{dualexp}) we see that $I_4=I_0-I_3$.
\end{proof}
\noindent
As we have already pointed out, plane wave exponential function (\ref{realexpwh}) satisfies $\mathbb DP=0=P\mathbb D$ and hence so does the function $I$. In the case one might be interested in checking this directly it is then convenient to write $I$ as  
\begin{align*}
I&=(f_0f_0^{\dagger}I_3-f_0^{\dagger}I_1)f_0+(f_0^{\dagger}f_0I_4+f_0I_2)f_0^{\dagger}\\
&=f_0(f_0^{\dagger}f_0I_3-f_0^{\dagger}I_2)+f_0^{\dagger}(f_0f_0^{\dagger}I_4+f_0I_1).
\end{align*}
The explanation for doing this, is that the two functions between brackets in the first equality are solutions of $\mathbb Df=0$, while the other two in the second equality satisfy $f\mathbb D=0$. Moreover, if we take for example the first function $f_0f_0^{\dagger}I_3-f_0^{\dagger}I_1$, then the condition $\mathbb Df=0$ will imply, among other relations, that  
\begin{equation}\label{relGH}
\partial_{x_0}G=\frac{(2n+1)r^2}{(x_0^2+r^2)^\frac{2n+3}{2}},\quad\partial_{x_0}H=\frac{2}{(x_0^2+r^2)^\frac{2n+1}{2}}.
\end{equation}
These relations will be useful later.

\begin{rem}
At first glance, it seems that the function $I$ can be extended to the whole space $\mathbb R^{2n+1}$ except for the line $\{(x_0,\underline x):\;\underline x=0\}$, reflecting the nature of functions $G$ and $H$. But a closer look reveals that these singularities are in fact removable on the upper half-line $\{(x_0,\underline x):\;x_0>0,\;\underline x=0\}$. This remains valid for all $n\ge2$ and for the case $n=1$ we simply encounter a point singularity at the origin, because for that particular value we have  
\[\frac{\underline z^\dagger\underline z}{r^2}G-\beta H=\frac{f_1^{\dagger}f_1x_0}{(x_0^2+r^2)^{\frac{3}{2}}}.\]
\end{rem}

\begin{defn}
The function $\mathsf{E}$ defined by 
\begin{multline*}
\mathsf{E}(x_0,\underline x)=\frac{1}{\sigma_{2n+1}}\left[-\frac{\underline z^{\dagger}}{(x_0^2+\vert \underline x\vert^2)^{\frac{2n+1}{2}}}+f_0\left(\beta H-\frac{\underline z^\dagger\underline z}{\vert \underline x\vert^2}G\right)\right.\\
\left.+f_0^{\dagger}\left(-\frac{x_0}{(x_0^2+\vert \underline x\vert^2)^{\frac{2n+1}{2}}}-\beta H+\frac{\underline z^\dagger\underline z}{\vert \underline x\vert^2}G\right)+f_0^{\dagger}f_0\left(\frac{\underline z^{\dagger}-\underline z}{(x_0^2+\vert \underline x\vert^2)^{\frac{2n+1}{2}}}\right)\right],\quad\beta=\sum_{j=1}^nf_j^\dagger f_j,
\end{multline*}
which satisfies $\mathbb D\mathsf{E}=0=\mathsf{E}\mathbb D$ in $\mathbb R^{2n+1}\setminus\{(x_0,\underline x):\;x_0\le0,\;\underline x=0\}$, will be called the Cauchy kernel for the $h$-submonogenic system. 
\end{defn}

\section{An integral representation formula for $\mathbb Df=0$}\label{sect5}

This last section of the paper is devoted to justify why we named function $\mathsf E$ the Cauchy kernel for the $h$-submonogenic system. We begin by stating a divergence theorem for the operator $\mathbb D$.

Suppose that $\Omega$ is a simply connected bounded and open set in $\mathbb R^{2n+1}$ with a piecewise smooth boundary denoted by $\partial\Omega$. For each $(x_0,\underline x)=(x_0,\dots,x_{2n})\in\partial\Omega$, we denote by 
\[\big(\nu_0(x_0,\underline x),\dots,\nu_{2n}(x_0,\underline x)\big)\]  
the outward unit normal to $\partial\Omega$ at this point. Furthemore, we shall also write
\begin{align*}
\underline\nu(x_0,\underline x)&=\sum_{j=1}^n\big(\nu_j(x_0,\underline x)+i\nu_{n+j}(x_0,\underline x)\big)f_j,\\
\underline\nu^{\dagger}(x_0,\underline x)&=\big(\underline\nu(x_0,\underline x)\big)^{\dagger}=\sum_{j=1}^n\big(\nu_j(x_0,\underline x)-i\nu_{n+j}(x_0,\underline x)\big)f_j^{\dagger}.
\end{align*}
Using the classical divergence theorem
\[\int_{\partial\Omega}F\nu_jdS=\int_{\Omega}\partial_{x_j}FdV,\quad j=0,\dots,2n,\] 
we can easily deduce:

\begin{thm}\label{GOGthm}
If $F(x_0,\underline x)$ and $G(x_0,\underline x)$ are continuously differentiable functions in $\Omega\cup\partial\Omega$, then we have
\[\int_{\partial\Omega}F\,\mathsf{n}\,G\,dS=2\int_{\Omega}\big((F\mathbb D)G+F(\mathbb DG)\big)dV,\]
where $\mathsf{n}=f_0\nu_0+f_0^{\dagger}\nu_0+f_0f_0^{\dagger}\underline\nu+f_0^{\dagger}f_0\underline\nu^{\dagger}$. 
\end{thm}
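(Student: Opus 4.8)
The plan is to reduce the statement to $2n+1$ scalar applications of the classical divergence theorem, one for each variable $x_0,x_1,\dots,x_{2n}$, and then reorganize the resulting Clifford-algebra combinations so that they assemble into the operator $\mathbb D$ acting from the left on $G$ and from the right on $F$. First I would write out $\mathbb D$ in real coordinates. Recall that $\mathbb D=f_0\partial_{\overline z_0}+f_0^{\dagger}\partial_{z_0}+f_0f_0^{\dagger}\partial_{\underline z^{\dagger}}+f_0^{\dagger}f_0\partial_{\underline z}$; using $\partial_{z_0}=\tfrac12(\partial_{x_0}-i\partial_{y_0})$, $\partial_{\overline z_0}=\tfrac12(\partial_{x_0}+i\partial_{y_0})$ together with the analogous expressions for $\partial_{\underline z}=\sum_j f_j^{\dagger}\partial_{z_j}$ and $\partial_{\underline z^{\dagger}}=\sum_j f_j\partial_{\overline z_j}$, and then specializing to functions that do not depend on $y_0$ (so the $\partial_{y_0}$ terms drop and $\partial_{z_0}=\partial_{\overline z_0}=\tfrac12\partial_{x_0}$), one finds
\[\mathbb D=\tfrac12(f_0+f_0^{\dagger})\partial_{x_0}+\tfrac12\sum_{j=1}^n\big(f_0f_0^{\dagger}(f_j\partial_{\overline z_j})+f_0^{\dagger}f_0(f_j^{\dagger}\partial_{z_j})\big),\]
which after expanding $\partial_{z_j},\partial_{\overline z_j}$ in terms of $\partial_{x_j},\partial_{x_{n+j}}$ is exactly $\tfrac12\sum_{k=0}^{2n}\mathsf{n}_k\,\partial_{x_k}$, where $\mathsf{n}_k$ is the constant Clifford coefficient sitting in front of $\partial_{x_k}$; concretely $\mathsf{n}_0=f_0+f_0^{\dagger}$ and the remaining $\mathsf{n}_k$ come from the real and imaginary parts of $f_0f_0^{\dagger}f_j$ and $f_0^{\dagger}f_0 f_j^{\dagger}$. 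The key point is that replacing $\partial_{x_k}$ by $\nu_k$ in this formula produces precisely $\tfrac12\,\mathsf{n}$ as defined in the theorem, since $\mathsf{n}=f_0\nu_0+f_0^{\dagger}\nu_0+f_0f_0^{\dagger}\underline\nu+f_0^{\dagger}f_0\underline\nu^{\dagger}$ is obtained from $\mathbb D$ by the substitution $\partial_{x_k}\mapsto\nu_k$.

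Once this dictionary is in place, the argument is almost mechanical. For each fixed $j$ and for any smooth scalar-valued function $\varphi$ on $\Omega\cup\partial\Omega$, the classical divergence theorem gives $\int_{\partial\Omega}\varphi\,\nu_j\,dS=\int_{\Omega}\partial_{x_j}\varphi\,dV$. Apply this componentwise to the $\mathbb C_{2n+2}$-valued product $F\,e_A\,G$ (or more directly to $F\mathsf{n}_j G$ with $\mathsf{n}_j$ a constant Clifford number and $F,G$ Clifford-valued), pull the constant $\mathsf{n}_j$ past the boundary integral, and use the Leibniz rule $\partial_{x_j}(F\mathsf{n}_j G)=(\partial_{x_j}F)\mathsf{n}_j G+F\mathsf{n}_j(\partial_{x_j}G)$. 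Summing over $j=0,\dots,2n$ yields
\[\int_{\partial\Omega}F\Big(\sum_{j}\mathsf{n}_j\nu_j\Big)G\,dS=\int_{\Omega}\sum_j\big((\partial_{x_j}F)\mathsf{n}_j G+F\mathsf{n}_j(\partial_{x_j}G)\big)dV.\]
The left-hand side is $\int_{\partial\Omega}F\,\mathsf{n}\,G\,dS$. On the right, $\sum_j F\mathsf{n}_j(\partial_{x_j}G)=2F(\mathbb DG)$ because $\mathbb D=\tfrac12\sum_j\mathsf{n}_j\partial_{x_j}$, and similarly $\sum_j(\partial_{x_j}F)\mathsf{n}_j G=2(F\mathbb D)G$, since $F\mathbb D$ means the same differential operator acting on $F$ from the right with the coefficients $\mathsf{n}_j$ kept in the same order. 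This gives exactly the claimed identity $\int_{\partial\Omega}F\,\mathsf{n}\,G\,dS=2\int_{\Omega}\big((F\mathbb D)G+F(\mathbb DG)\big)dV$.

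The only genuine subtlety — and the step I would be most careful about — is the bookkeeping of noncommutativity: one must verify that the coefficient $\mathsf{n}_j$ in the middle of the triple product $F\mathsf{n}_j G$ is literally the same constant Clifford number whether it is read off from the left action $\mathbb DG=\tfrac12\sum_j\mathsf{n}_j\partial_{x_j}G$ or the right action $F\mathbb D=\tfrac12\sum_j(\partial_{x_j}F)\mathsf{n}_j$. This is automatic from the definition of $\mathbb D$ as a single fixed operator $\tfrac12\sum_j\mathsf{n}_j\partial_{x_j}$ — there is only one set of coefficients $\{\mathsf{n}_j\}$, and $F\mathbb D$ by convention inserts $\partial_{x_j}F$ to the left of $\mathsf{n}_j$ — but it does mean that one should not attempt to "simplify" $\mathbb D$ into a form with the Witt generators on the right before extracting coefficients, since that would change which factor multiplies which. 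A secondary point worth a line of justification is that the $y_0$-independence assumption is harmless here: the theorem is stated for functions on $\Omega\subset\mathbb R^{2n+1}$ with coordinates $(x_0,\dots,x_{2n})$, so $\mathbb D$ on this domain already contains no $\partial_{y_0}$, and the identification $\mathbb D=\tfrac12\sum_{k=0}^{2n}\mathsf{n}_k\partial_{x_k}$ with $\mathsf{n}_0=f_0+f_0^{\dagger}$ and the remaining coefficients produced by the Witt-basis expansion of $f_0f_0^{\dagger}\partial_{\underline z^{\dagger}}+f_0^{\dagger}f_0\partial_{\underline z}$ is exact, not an approximation. With the coefficient identification checked, the theorem follows immediately by linearity from the scalar divergence theorem, and no further estimates are needed.
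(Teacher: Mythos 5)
Your argument is correct and is precisely the computation the paper leaves implicit: its proof consists of the single remark that the identity follows from the componentwise divergence theorem $\int_{\partial\Omega}F\nu_j\,dS=\int_{\Omega}\partial_{x_j}F\,dV$, and your identification $\mathbb D=\tfrac12\sum_{k=0}^{2n}\mathsf{n}_k\partial_{x_k}$ with $\mathsf{n}=\sum_{k}\mathsf{n}_k\nu_k$, followed by the Leibniz rule and the (harmless, since $f_0f_0^{\dagger}$ and $f_0^{\dagger}f_0$ commute with $f_j,f_j^{\dagger}$ for $j\ge1$) right-action bookkeeping, is exactly the intended route. The only blemish is a spurious extra factor $\tfrac12$ in your intermediate display for $\mathbb D$, placed in front of the sum over $j$ while the operators $\partial_{z_j},\partial_{\overline z_j}$ inside it still carry their own $\tfrac12$; after the expansion you describe, the coefficients $\mathsf{n}_k$ and the overall factor $2$ in the theorem nevertheless come out correctly.
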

\noindent
The main problem we face in willing to prove an integral representation formula is the fact that the Cauchy kernel $\mathsf{E}$ has a half-line of singularities. However, we overcome this difficulty by considering the function $\mathsf{K}=\partial_{x_0}\mathsf{E}$, which has a single singularity. Indeed, using (\ref{relGH}) we obtain
\begin{multline*}
\mathsf{K}(x_0,\underline x)=\frac{1}{\sigma_{2n+1}}\left[\frac{(2n+1)x_0\underline z^{\dagger}}{(x_0^2+\vert \underline x\vert^2)^{\frac{2n+3}{2}}}+f_0\left(\frac{2\beta}{(x_0^2+\vert \underline x\vert^2)^{\frac{2n+1}{2}}}-\frac{(2n+1)\underline z^\dagger\underline z}{(x_0^2+\vert \underline x\vert^2)^{\frac{2n+3}{2}}}\right)\right.\\
\left.+f_0^{\dagger}\left(\frac{2nx_0^2-\vert\underline x\vert^2}{(x_0^2+\vert \underline x\vert^2)^{\frac{2n+3}{2}}}-\frac{2\beta}{(x_0^2+\vert \underline x\vert^2)^{\frac{2n+1}{2}}}+\frac{(2n+1)\underline z^\dagger\underline z}{(x_0^2+\vert \underline x\vert^2)^{\frac{2n+3}{2}}}\right)+f_0^{\dagger}f_0\left(\frac{(2n+1)x_0(\underline z-\underline z^{\dagger})}{(x_0^2+\vert \underline x\vert^2)^{\frac{2n+3}{2}}}\right)\right].
\end{multline*}
If we deal with the above function instead of $\mathsf{E}$, we have the following result:

\begin{thm}\label{partIRF}
Suppose that $F(x_0,\underline x)$ is a continuously differentiable function in $\Omega\cup\partial\Omega$. If $F(x_0,\underline x)$ is $h$-submonogenic in $\Omega$, then
\[\partial_{x_0}F(u_0,\underline u)=\int_{\partial\Omega}\mathsf{K}(x_0-u_0,\underline x-\underline u)\mathsf{n}(x_0,\underline x)F(x_0,\underline x)dS(x_0,\underline x),\]
for every $(u_0,\underline u)\in\Omega$.
\end{thm}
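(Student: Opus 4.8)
The plan is to run the classical residue (Cauchy) argument: apply the Stokes type theorem (Theorem~\ref{GOGthm}) on $\Omega$ with a small ball around the singular point excised, use that $F$ and the kernel solve $\mathbb D$ from the appropriate sides, and then evaluate the boundary contribution coming from the small sphere in the limit as its radius shrinks to zero.

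First I would note that, unlike $\mathsf E$, the function $\mathsf K=\partial_{x_0}\mathsf E$ is smooth on all of $\mathbb R^{2n+1}\setminus\{0\}$: in the explicit expression for $\mathsf K$ the only denominators that occur are powers of $x_0^2+\vert\underline x\vert^2$. Moreover, since $\partial_{x_0}$ commutes with $\mathbb D$ and $\mathsf E\mathbb D=0$ off the half-line $\{x_0\le0,\ \underline x=0\}$, we get $\mathsf K\mathbb D=0=\mathbb D\mathsf K$ there, hence on all of $\mathbb R^{2n+1}\setminus\{0\}$ since the components of $\mathsf K\mathbb D$ are rational functions, continuous away from the origin, that vanish on a dense open set. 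Now fix $p=(u_0,\underline u)\in\Omega$, pick $\varepsilon>0$ with $\overline{B_\varepsilon(p)}\subset\Omega$, and set $\Omega_\varepsilon=\Omega\setminus\overline{B_\varepsilon(p)}$. Applying Theorem~\ref{GOGthm} on $\Omega_\varepsilon$ with left function $\mathsf K(\,\cdot\,-p)$ and right function $F$ — both of class $C^1$ on $\overline{\Omega_\varepsilon}$, with $\mathsf K(\,\cdot\,-p)\mathbb D=0$ and $\mathbb D F=0$ there — makes the volume integral vanish, so, taking orientations into account,
\[\int_{\partial\Omega}\mathsf K(x-p)\,\mathsf n(x)\,F(x)\,dS(x)=\int_{S_\varepsilon(p)}\mathsf K(x-p)\,\mathsf n(x)\,F(x)\,dS(x),\]
where $S_\varepsilon(p)$ carries the outward normal of the ball, so that $\mathsf n(x)$ is formed from $\nu=(x-p)/\varepsilon$.

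Next I would pass to the limit $\varepsilon\to0$ on the right. Translating $p$ to the origin and writing $x=\varepsilon\omega$ with $\omega\in S^{2n}$, using that $\mathsf K$ is homogeneous of degree $-(2n+1)$ while $dS$ scales like $\varepsilon^{2n}$, the integral over $S_\varepsilon(p)$ becomes $\varepsilon^{-1}\int_{S^{2n}}\mathsf K(\omega)\,\mathsf n(\omega)\,F(\varepsilon\omega)\,dS(\omega)$. Inserting $F(\varepsilon\omega)=F(0)+\varepsilon\sum_{j=0}^{2n}\omega_j(\partial_{x_j}F)(0)+o(\varepsilon)$ (valid since $F\in C^1$, and the $o(\varepsilon)$ term contributes nothing in the limit because $\mathsf K(\omega)\mathsf n(\omega)$ is bounded on $S^{2n}$), the assertion reduces to the two residue identities
\[\int_{S^{2n}}\mathsf K(\omega)\,\mathsf n(\omega)\,dS(\omega)=0,\qquad\int_{S^{2n}}\omega_j\,\mathsf K(\omega)\,\mathsf n(\omega)\,dS(\omega)=\delta_{0j},\quad j=0,\dots,2n.\]
Granting these, the spherical integral tends to $(\partial_{x_0}F)(0)=\partial_{x_0}F(u_0,\underline u)$ after undoing the translation, which is the claim. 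To prove the identities one restricts to $S^{2n}$, where $x_0^2+\vert\underline x\vert^2=1$ so that all the denominators in $\mathsf K$ disappear and $\mathsf K(\omega)$ becomes a quadratic polynomial in $\omega_0,\dots,\omega_{2n}$ with Witt-basis coefficients, while $\mathsf n(\omega)=(f_0+f_0^{\dagger})\omega_0+f_0f_0^{\dagger}\underline\omega+f_0^{\dagger}f_0\underline\omega^{\dagger}$ with $\underline\omega=\sum_{j=1}^n(\omega_j+i\omega_{n+j})f_j$. One then multiplies out $\mathsf K(\omega)\mathsf n(\omega)$, drops the monomials that are odd under a coordinate reflection (these integrate to $0$), evaluates the surviving quadratic moments through $\int_{S^{2n}}\omega_i\omega_j\,dS=\tfrac{\sigma_{2n+1}}{2n+1}\delta_{ij}$, and simplifies using $f_jf_k^{\dagger}+f_k^{\dagger}f_j=\delta_{jk}$, $\beta=\sum_jf_j^{\dagger}f_j$ and $\underline\omega\,\underline\omega^{\dagger}+\underline\omega^{\dagger}\underline\omega=\vert\underline\omega\vert^2$; the normalization $1/\sigma_{2n+1}$ built into $\mathsf K$ is precisely what forces the right-hand sides to be $0$ and $\delta_{0j}$.

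\textbf{Main obstacle.} The vanishing of the volume integral is immediate from Theorem~\ref{GOGthm}, and the scaling/Taylor reduction is routine. The real work is the explicit bookkeeping in the two residue identities: $\mathsf K(\omega)\mathsf n(\omega)$ is a product of two four-term expressions with vector-valued coefficients, so one must verify both that all unwanted monomials either cancel against one another or integrate to zero over $S^{2n}$, and that the surviving contributions recombine to exactly $\delta_{0j}$ and not some other scalar. Keeping careful track of the noncommutative Witt-basis products and of the interplay between the $\omega_0$-even and $\omega_0$-odd pieces of $\mathsf K$ is where essentially all of the effort lies.
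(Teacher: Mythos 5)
Your overall architecture is the same as the paper's: excise a small ball around $(u_0,\underline u)$, apply Theorem~\ref{GOGthm} on $\Omega\setminus\overline{B_\varepsilon}$ to trade $\partial\Omega$ for the small sphere, and evaluate the sphere integral as $\varepsilon\to0$. The reduction steps up to and including the Taylor expansion of $F$ are fine. The gap is in the final evaluation: the two ``residue identities'' you propose to verify are false, and the theorem does \emph{not} reduce to them. Writing $\mathsf K(\omega)\mathsf n(\omega)=\frac{1}{\sigma_{2n+1}}\bigl(A\,\omega_0-Bf_0\underline\omega^{\dagger}-Cf_0^{\dagger}\underline\omega\bigr)$ on $S^{2n}$, with $A=2nf_0^{\dagger}f_0+2\beta(f_0f_0^{\dagger}-f_0^{\dagger}f_0)$, $B=(2n+1)-2\beta$, $C=1+2\beta$ (this is the simplification the paper carries out explicitly), the zeroth moment $\int_{S^{2n}}\mathsf K\,\mathsf n\,dS$ does vanish by oddness, but the first moments do not come out to $\delta_{0j}$. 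For instance $\int_{S^{2n}}\omega_0\,\mathsf K(\omega)\mathsf n(\omega)\,dS(\omega)=\tfrac{1}{2n+1}A$, which is a genuinely non-scalar Clifford number (multiplying by $f_0$ on the left gives $(2n-2\beta)f_0\neq f_0$), and for $j\ge1$ the moments $\int_{S^{2n}}\omega_j\,\mathsf K\,\mathsf n\,dS$ pick up nonzero multiples of $f_0f_j^{\dagger}$ and $f_0^{\dagger}f_j$ rather than $0$. So ``granting these'' grants something untrue, and the limit of the sphere integral for a general $C^1$ function $F$ is not $\partial_{x_0}F(u_0,\underline u)$ but the Clifford combination
\[\frac{1}{2n+1}\Bigl(A\,\partial_{x_0}F-2Bf_0\,\partial_{\underline z}F-2Cf_0^{\dagger}\,\partial_{\underline z^{\dagger}}F\Bigr)\Big|_{(u_0,\underline u)}.\]

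What is missing is a \emph{second} use of the hypothesis $\mathbb DF=0$: the system gives $2f_0\partial_{\underline z}F=-f_0f_0^{\dagger}\partial_{x_0}F$ and $2f_0^{\dagger}\partial_{\underline z^{\dagger}}F=-f_0^{\dagger}f_0\partial_{x_0}F$, and only after substituting these does the displayed combination collapse to $\partial_{x_0}F(u_0,\underline u)$. In your write-up the $h$-submonogenicity of $F$ is consumed entirely in making the volume term of the Stokes formula vanish, so the residue computation has no mechanism left to cancel the $\partial_{x_j}F$ contributions for $j\ge1$ or to normalize the coefficient of $\partial_{x_0}F$. The fix is local and does not change your strategy (your moment computation and the paper's device of pushing the sphere integral back to a volume integral of first derivatives compute the same limit), but as stated the proof fails at the verification of the moment identities. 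One further small caveat: since $F$ is only assumed $C^1$, the pointwise expansion $F(\varepsilon\omega)=F(0)+\varepsilon\sum_j\omega_j\partial_{x_j}F(0)+o(\varepsilon)$ holds with $o(\varepsilon)$ uniform in $\omega$ only by continuity of the first derivatives; you should say this (or, as the paper does, average the derivatives over the ball and use their continuity), but that is a minor point compared with the incorrect moments.
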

\begin{proof}
Assume that $(u_0,\underline u)\in\Omega$. We denote by $B_{\epsilon}(u_0,\underline u)$ the open ball  with center $(u_0,\underline u)$ and radius $\epsilon>0$. By Theorem \ref{GOGthm}, we have that 
\[\int_{\partial(\Omega\setminus B_{\epsilon}(u_0,\underline u))}\mathsf{K}(x_0-u_0,\underline x-\underline u)\mathsf{n}(x_0,\underline x)F(x_0,\underline x)dS(x_0,\underline x)=0\]
and consequently
\[\int_{\partial\Omega}\mathsf{K}\,\mathsf{n}\,F\,dS=\int_{\partial B_{\epsilon}(u_0,\underline u)}\mathsf{K}\,\mathsf{n}\,F\,dS.\]
The result will be proved if we verify that the integral on the right-hand side has limit $\partial_{x_0}F(u_0,\underline u)$ as $\epsilon$ tends to zero. First, note that $\nu_j(x_0,\underline x)=(x_j-u_j)/\epsilon$ for $(x_0,\underline x)\in\partial B_{\epsilon}(u_0,\underline u)$, and hence
\[\mathsf{n}(x_0,\underline x)=\frac{1}{\epsilon}\big(f_0(x_0-u_0)+f_0^{\dagger}(x_0-u_0)+f_0f_0^{\dagger}(\underline z-\underline z_0)+f_0^{\dagger}f_0\big(\underline z^{\dagger}-\underline z_0^{\dagger}\big)\big),\]
where $\underline z_0=\sum_{j=1}^n(u_j+iu_{n+j})f_j$. A direct computation then yields
\begin{multline*}
\mathsf{K}(x_0-u_0,\underline x-\underline u)\mathsf{n}(x_0,\underline x)=\frac{1}{\sigma_{2n+1}\epsilon^{2n+2}}\Big(\big(2nf_0^{\dagger}f_0+2\beta(f_0f_0^{\dagger}-f_0^{\dagger}f_0)\big)(x_0-u_0)\\
-\big((2n+1)-2\beta\big)f_0\big(\underline z^{\dagger}-\underline z_0^{\dagger}\big)-\big(1+2\beta\big)f_0^{\dagger}(\underline z-\underline z_0)\Big),\quad (x_0,\underline x)\in\partial B_{\epsilon}(u_0,\underline u).
\end{multline*}
This now implies
\begin{align*}
\int_{\partial B_{\epsilon}(u_0,\underline u)}\mathsf{K}\,\mathsf{n}\,F\,dS=\frac{1}{\sigma_{2n+1}\epsilon^{2n+1}}\left(\big(2nf_0^{\dagger}f_0+2\beta(f_0f_0^{\dagger}-f_0^{\dagger}f_0)\big)\int_{B_{\epsilon}(u_0,\underline u)}\partial_{x_0}FdV\right.\\
\left.-2\big((2n+1)-2\beta\big)f_0\int_{B_{\epsilon}(u_0,\underline u)}\partial_{\underline z}FdV-2\big(1+2\beta\big)f_0^{\dagger}\int_{B_{\epsilon}(u_0,\underline u)}\partial_{\underline z^{\dagger}}FdV\right),
\end{align*}
where we have also used the classical divergence theorem. It is not difficult to check that the right-hand side of the last expression has limit
\begin{multline}\label{liguald}
\frac{1}{(2n+1)}\left(\big(2nf_0^{\dagger}f_0+2\beta(f_0f_0^{\dagger}-f_0^{\dagger}f_0)\big)\partial_{x_0}F(u_0,\underline u)\right.\\
\left.-2\big((2n+1)-2\beta\big)f_0\partial_{\underline z}F(u_0,\underline u)-2\big(1+2\beta\big)f_0^{\dagger}\partial_{\underline z^{\dagger}}F(u_0,\underline u)\right)
\end{multline}
as $\epsilon$ tends to zero. But since $F$ is $h$-submonogenic, it follows that $2f_0\partial_{\underline z}F=-f_0f_0^{\dagger}\partial_{x_0}F$ and $2f_0^{\dagger}\partial_{\underline z^{\dagger}}F=-f_0^{\dagger}f_0\partial_{x_0}F$. Using this, we may verify that (\ref{liguald}) is in fact equal to $\partial_{x_0}F(u_0,\underline u)$.    
\end{proof}

We end the paper by showing that for certain sets $\Omega$ and $h$-submonogenic functions $F(x_0,\underline x)$ it is possible to have a standard Cauchy integral representation formula.
 
\begin{thm}
Let $C_a(R)$ be the semi-infinite cylinder defined by  
\[C_a(R)=\left\{(x_0,\underline x)\in\mathbb R^{2n+1}:\;x_0<a,\;\vert\underline x\vert<R\right\}.\]
Suppose that $F(x_0,\underline x)$ is a continuously differentiable function in $C_a(R)\cup\partial C_a(R)$ such that $F$ is bounded and satisfies $\lim_{x_0\rightarrow-\infty}F(x_0,\underline x)=0$. If $F(x_0,\underline x)$ is $h$-submonogenic in $C_a(R)$, then
\[F(u_0,\underline u)=-\int_{\partial C_a(R)}\mathsf{E}(x_0-u_0,\underline x-\underline u)\mathsf{n}(x_0,\underline x)F(x_0,\underline x)dS(x_0,\underline x),\quad (u_0,\underline u)\in C_a(R).\]
\end{thm}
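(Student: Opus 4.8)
The plan is to integrate the identity of Theorem~\ref{partIRF} along the vertical half-line $\{(p_0,\underline u):p_0\le u_0\}$ lying below the evaluation point. Since $\mathsf K=\partial_{x_0}\mathsf E$, integrating $\mathsf K$ in its $x_0$-slot telescopes, by the fundamental theorem of calculus, to a difference of values of $\mathsf E$; while $\int\partial_{x_0}F(p_0,\underline u)\,dp_0$ telescopes, thanks to $\lim_{x_0\to-\infty}F=0$, to $F(u_0,\underline u)$. The two telescopings together should produce the stated formula, and the hypotheses that $F$ is bounded and vanishes at $-\infty$ will be used precisely to control the behaviour at the far end of the half-line.

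To make this rigorous I would first localise. For $T$ so large that $-T<u_0$, put $\Omega_T=\{-T<x_0<a,\ |\underline x|<R\}$, a bounded convex cylinder whose boundary is the union of the lateral wall, the top disk $\{x_0=a\}$ and the bottom disk $\{x_0=-T\}$, and fix a small $\eta>0$. Applying Theorem~\ref{partIRF} to $\Omega_T$ at each point $(p_0,\underline u)$ with $p_0\in(-T+\eta,u_0]$ and integrating in $p_0$ over that interval, I observe that on the compact set $[-T+\eta,u_0]\times\partial\Omega_T$ the integrand $\mathsf K(x_0-p_0,\underline x-\underline u)\,\mathsf n(x_0,\underline x)\,F(x_0,\underline x)$ is continuous: the point singularity of $\mathsf K$ at the origin is never met, because on the wall $\underline x\ne\underline u$, on the top $x_0=a>p_0$ and on the bottom $x_0=-T<p_0$. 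Hence Fubini lets me swap the two integrations. Computing the inner integral $\int_{-T+\eta}^{u_0}\mathsf K(x_0-p_0,\underline x-\underline u)\,dp_0$ via $t=x_0-p_0$: on the wall and the top the resulting path in $t$ avoids the singular half-line of $\mathsf E$, so by the fundamental theorem of calculus it equals $\mathsf E(x_0+T-\eta,\underline x-\underline u)-\mathsf E(x_0-u_0,\underline x-\underline u)$; on the bottom it equals a function $\Psi_T(\underline x)$ bounded uniformly in $T$ and $\underline x$ by a constant depending only on $\eta$, since $\mathsf K$ is smooth off the origin and $|\mathsf K(x_0,\underline x)|\le c\,|(x_0,\underline x)|^{-(2n+1)}$. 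Writing $\Gamma_T:=\partial C_a(R)\cap\{x_0\ge-T\}$ (the wall-and-top part of $\partial\Omega_T$) and using $\int_{-T+\eta}^{u_0}\partial_{x_0}F(p_0,\underline u)\,dp_0=F(u_0,\underline u)-F(-T+\eta,\underline u)$, this yields
\[
F(u_0,\underline u)-F(-T+\eta,\underline u)=\int_{\Gamma_T}\!\bigl(\mathsf E(x_0+T-\eta,\underline x-\underline u)-\mathsf E(x_0-u_0,\underline x-\underline u)\bigr)\mathsf n F\,dS+\int_{\{x_0=-T\}}\!\Psi_T(\underline x)\,\mathsf n F\,dS.
\]

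Now I would let $T\to\infty$. The left side tends to $F(u_0,\underline u)$. The bottom-disk term tends to $0$ by dominated convergence on the fixed finite-area disk, the integrand being dominated by a constant (uniform bound on $\Psi_T$ and on $\mathsf n$, together with $|F|\le M$) while $F(-T,\underline x)\to0$ pointwise. In the $\Gamma_T$-integral, the part involving $\mathsf E(x_0-u_0,\underline x-\underline u)$ equals $-\int_{\partial C_a(R)\cap\{x_0\ge-T\}}\mathsf E(x_0-u_0,\underline x-\underline u)\,\mathsf n F\,dS$, whose limit — which exists because every other term converges — is the integral over $\partial C_a(R)$ appearing in the statement. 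It remains to show that the part involving $\mathsf E(x_0+T-\eta,\underline x-\underline u)$ vanishes: on the top disk it is $O\bigl((a+T)^{-2n}\bigr)\to0$, and on the wall I would substitute $v=x_0+T-\eta$, using that the outward normal of the cylinder wall, hence $\mathsf n$, does not depend on $x_0$; the integral then becomes $\int_{-\eta}^{a+T-\eta}\!\int_{S^{2n-1}}\mathsf E(v,R\underline\omega-\underline u)\,\mathsf n(R\underline\omega)\,F(v-T+\eta,R\underline\omega)\,R^{2n-1}\,dS(\underline\omega)\,dv$, whose integrand is dominated uniformly in $T$ by $M\,|\mathsf E(v,R\underline\omega-\underline u)|\,|\mathsf n(R\underline\omega)|\,R^{2n-1}$. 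This dominating function is integrable on $[-\eta,\infty)\times S^{2n-1}$ because $\mathsf E$ is homogeneous of degree $-2n$ with $2n>1$ and is smooth off the negative half-line, while the cutoff $v\ge-\eta$ together with $|R\underline\omega-\underline u|\ge R-|\underline u|>0$ keeps the argument of $\mathsf E$ away from that half-line; and the integrand tends to $0$ pointwise because $F(v-T+\eta,R\underline\omega)\to0$. Dominated convergence finishes the argument and gives $F(u_0,\underline u)=-\int_{\partial C_a(R)}\mathsf E(x_0-u_0,\underline x-\underline u)\,\mathsf n(x_0,\underline x)\,F(x_0,\underline x)\,dS(x_0,\underline x)$.

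The main obstacle throughout is the infinite area of the lateral wall of $C_a(R)$: one cannot simply extend Theorem~\ref{partIRF} to the unbounded cylinder and interchange the half-line integration with a surface integral over the whole wall at once, because $\mathsf E(s,\underline v)$ does \emph{not} tend to $0$ as $s\to-\infty$ for fixed $\underline v\ne0$ — it approaches a nonzero homogeneous limit — so the integrals over the wall are only conditionally convergent and become controllable only because they are paired with the decaying factor $F$. The device that gets around this is to keep $T$ finite while swapping integrations and then to dispose of the resulting $T$-dependent boundary term through the shift $v=x_0+T-\eta$, which turns it into a genuine dominated-convergence limit governed by $\lim_{x_0\to-\infty}F=0$; this is exactly why boundedness of $F$ alone would not be enough.
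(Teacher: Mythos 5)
Your proof is correct and follows essentially the same route as the paper's: both integrate the point-singularity formula of Theorem \ref{partIRF} along the downward vertical half-line through $(u_0,\underline u)$, recover $\mathsf{E}$ from $\mathsf{K}=\partial_{x_0}\mathsf{E}$ by the fundamental theorem of calculus, and use the boundedness and decay of $F$ to kill the contributions from $x_0\to-\infty$. The only difference is bookkeeping: the paper writes $\mathsf{E}(x_0,\underline x)=-\int_0^\infty\mathsf{K}(x_0+s,\underline x)\,ds$ up front and interchanges that $s$-integral with the surface integral over the truncated cylinder, whereas you truncate both integrations simultaneously and spell out the Fubini step and the tail/dominated-convergence estimates more explicitly.
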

\begin{proof}
Let $(u_0,\underline u)\in C_a(R)$ and for $b<u_0$ we put 
\begin{align*}
C_{a,b}(R)&=\left\{(x_0,\underline x)\in\mathbb R^{2n+1}:\;b<x_0<a,\;\vert\underline x\vert<R\right\},\\
D_b(R)&=\left\{(x_0,\underline x)\in\mathbb R^{2n+1}:\;x_0=b,\;\vert\underline x\vert\le R\right\}.
\end{align*}
First, observe that the above integral is understood as 
\begin{multline*}
\int_{\partial C_a(R)}\mathsf{E}(x_0-u_0,\underline x-\underline u)\mathsf{n}(x_0,\underline x)F(x_0,\underline x)dS(x_0,\underline x)\\
=\lim_{b\rightarrow-\infty}\int_{\partial C_{a,b}(R)\setminus D_b(R)}\mathsf{E}(x_0-u_0,\underline x-\underline u)\mathsf{n}(x_0,\underline x)F(x_0,\underline x)dS(x_0,\underline x).
\end{multline*} 
From the equalities
\[\mathsf{E}(x_0,\underline x)=-\int_0^\infty\partial_{s}\mathsf{E}(x_0+s,\underline x)ds=-\int_0^\infty\mathsf{K}(x_0+s,\underline x)ds\]
we obtain 
\begin{multline*}
\int_{\partial C_{a,b}(R)\setminus D_b(R)}\mathsf{E}(x_0-u_0,\underline x-\underline u)\mathsf{n}(x_0,\underline x)F(x_0,\underline x)dS(x_0,\underline x)\\
=-\int_0^\infty\left(\int_{\partial C_{a,b}(R)\setminus D_b(R)}\mathsf{K}(x_0-u_0+s,\underline x-\underline u)\mathsf{n}(x_0,\underline x)F(x_0,\underline x)dS(x_0,\underline x)\right)ds.
\end{multline*}
Theorem \ref{partIRF} now implies that 
\begin{multline*}
\int_{\partial C_{a,b}(R)\setminus D_b(R)}\mathsf{K}(x_0-u_0+s,\underline x-\underline u)\mathsf{n}(x_0,\underline x)F(x_0,\underline x)dS(x_0,\underline x)\\
=\partial_{u_0}F(u_0-s,\underline u)-\int_{D_{b}(R)}\mathsf{K}(x_0-u_0+s,\underline x-\underline u)\mathsf{n}(x_0,\underline x)F(x_0,\underline x)dS(x_0,\underline x)
\end{multline*}
and for that reason, using Lebesgue's dominated convergence theorem,
\[\lim_{b\rightarrow-\infty}\int_{\partial C_{a,b}(R)\setminus D_b(R)}\mathsf{K}(x_0-u_0+s,\underline x-\underline u)\mathsf{n}(x_0,\underline x)F(x_0,\underline x)dS(x_0,\underline x)=\partial_{u_0}F(u_0-s,\underline u).\]
We thus get that
\begin{multline*}
\int_{\partial C_a(R)}\mathsf{E}(x_0-u_0,\underline x-\underline u)\mathsf{n}(x_0,\underline x)F(x_0,\underline x)dS(x_0,\underline x)=-\int_0^\infty\partial_{u_0}F(u_0-s,\underline u)ds\\
=\int_0^\infty\partial_{s}F(u_0-s,\underline u)ds=-F(u_0,\underline u),
\end{multline*}
which completes the proof. 
\end{proof}

\subsection*{Acknowledgments}

D. Pe\~na Pe\~na acknowledges the support of a Postdoctoral INdAM Fellowship cofunded by Marie Curie actions.

\end{document}